\documentclass[a4paper,oneside,reqno]{amsart}

\usepackage[top=30mm,bottom=30mm,left=30mm,right=30mm]{geometry}
\usepackage{amsmath,mathtools,amssymb,amsthm}
\usepackage[foot]{amsaddr}
\usepackage[shortlabels]{enumitem}
\usepackage[all]{xy}

\usepackage[
 bookmarksnumbered=true,
 colorlinks=true,
 allcolors=blue
]{hyperref}

\usepackage[
 backend=biber,
 style=alphabetic,
 sorting=nyvt
]{biblatex}

\newcommand{\etale}{\mathrm{\acute{e}t}}
\newcommand{\abelian}{\mathrm{ab}}

\DeclareMathOperator{\Spec}{Spec}
\DeclareMathOperator{\Proj}{Proj}

\DeclareMathOperator{\PGL}{PGL}
\DeclareMathOperator{\Aut}{Aut}
\DeclareMathOperator{\SL}{SL}
\DeclareMathOperator{\PSL}{PSL}
\DeclareMathOperator{\Frac}{Frac}

\newcommand{\AbsGalGrp}[1]{G_{#1}}
\newcommand{\Abelianization}[1]{#1^{\abelian}}
\newcommand{\EtFundGrpWithPt}[2]{\pi_{1}^{\etale}(#1,#2)}
\newcommand{\EtFundGrp}[1]{\pi_{1}^{\etale}(#1)}
\newcommand{\FieldAlgeClosure}[1]{\overline{#1}}
\newcommand{\CardinalityOfSet}[1]{\lvert #1\rvert}

\numberwithin{equation}{section}

\theoremstyle{plain}
\newtheorem{theorem}{Theorem}[section]
\newtheorem*{theorem*}{Theorem}
\newtheorem{Itheorem}{Theorem}

\newtheorem{proposition}[theorem]{Proposition}
\newtheorem{lemma}[theorem]{Lemma}
\newtheorem{observation}[theorem]{Observation}
\newtheorem{corollary}[theorem]{Corollary}
\newtheorem{Iconjecture}[Itheorem]{Conjecture}

\theoremstyle{definition}
\newtheorem{definition}[theorem]{Definition}
\newtheorem{example}[theorem]{Example}
\newtheorem{remark}[theorem]{Remark}

\title[On branched coverings of the projective line over the integers]{On branched coverings of the projective line over the integers}
\date{Version of \today}

\author[R.~Shimizu]{Ryoji Shimizu}
\address{Institute of Science Tokyo, 2-12-1 Ookayama, Meguro-ku, Tokyo 152-8550, Japan}
\email{shimizu.r.ap@m.titech.ac.jp}

\author[N.~Yamaguchi]{Naganori Yamaguchi}
\address{Tokyo Denki University, 5 Senju-Asahi-cho, Adachi-ku, Tokyo 120-8551, Japan}
\email{n.yamaguchi@mail.dendai.ac.jp}

\subjclass[2020]{Primary 14H30; Secondary 14F35, 14G40}
\keywords{\'etale fundamental group, arithmetic surfaces, normal crossings divisors, Abhyankar's lemma, solvable quotients, finite simple quotients}
\thanks{This work was supported by JSPS KAKENHI Grant Numbers 25KJ0125 and 23KJ0881.}

\begin{document}

\begin{abstract}
	We investigate the \'etale fundamental group of the complement of a horizontal divisor on $\mathbb{P}^{1}_{\mathbb{Z}}$.
	We prove that this group has no nontrivial finite solvable quotient if and only if the divisor is normal crossings at the prime~$2$.
	Moreover, if the divisor is normal crossings at the prime~$2$ and either has three irreducible components or is normal crossings at the prime~$3$, we show that no quotient isomorphic to $\PSL_{2}(q)$ can occur for certain prime powers~$q$.
\end{abstract}

\maketitle
\tableofcontents

\section*{Introduction}\label{intro}

For each $a\in \mathbb{P}^{1}_{\mathbb{Q}}(\mathbb{Q})$, let $D_a\subset \mathbb{P}^{1}_{\mathbb{Z}}$ denote the Zariski closure of $a$.
That is, if $a=\frac{u}{v}\in \mathbb{Q}$ is written in lowest terms with $v\in\mathbb{Z}_{>0}$, then $D_a$ is defined on the affine chart with coordinate $t$ by the equation $vt-u=0$.
We write $D_{\infty}$ for the divisor at infinity.
Let $D \coloneq \sum_{i=1}^{r} D_{a_i}$ be a reduced effective Cartier divisor on $\mathbb{P}^{1}_{\mathbb{Z}}$, where the $a_i$ are distinct.
We say that $\mathbb{P}^{1}_{\mathbb{Z}} \setminus D$ has \emph{trivial \'{e}tale fundamental group} if
\begin{equation*}
	\EtFundGrp{\mathbb{P}^{1}_{\mathbb{Z}} \setminus D}=1.
\end{equation*}
In~\cite[Appendix]{MR1305400}, Y.~Ihara recalled the following theorem of T.~Saito, which gives a sufficient geometric condition for this property:

\bigskip

\begin{theorem*}
	If $D$ is a normal crossings divisor, then $\mathbb{P}^{1}_{\mathbb{Z}} \setminus D$ has trivial \'{e}tale fundamental group.
\end{theorem*}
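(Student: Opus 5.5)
Let $X=\mathbb{P}^{1}_{\mathbb{Z}}$, $U=X\setminus D$, and let $f\colon V\to U$ be a connected finite étale covering. We take $Y$ to be the normalization of $X$ in the function field of $V$; it is a normal arithmetic surface, finite over $X$ and étale over $U$. We will show that $Y\to X$ is étale everywhere. Then $Y$ is a connected finite étale cover of $\mathbb{P}^1_{\mathbb{Z}}$, and $\pi_1^{\etale}(\mathbb{P}^1_{\mathbb{Z}})=1$, since $\mathbb{P}^1_{\mathbb{Z}}$ is simply connected: by Zariski–Nagata purity together with Minkowski's theorem $\pi_1(\Spec\mathbb{Z})=1$, or via the surjection $\pi_1(\mathbb{P}^1_{\mathbb{Z}})\to\pi_1(\Spec \mathbb{Z})$, whose kernel is controlled by the geometric fibre $\mathbb{P}^1_{\overline{\mathbb{Q}}}$. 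It follows that $Y=X$ and $V=U$.

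\emph{Step 1: tameness along $D$.} The generic points of the components $D_{a_i}$ have residue fields of characteristic $0$, so the ramification of $Y\to X$ along each $D_{a_i}$ is tame, with some index $e_i$.

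\emph{Step 2: Abhyankar's lemma, killing the ramification.} Since $D$ is a normal crossings divisor and the ramification is tame, Abhyankar's lemma gives the following local description at every point $x\in D$. If $x$ lies on a single component, say with local equation $s=0$, then $Y$ is, étale locally, a disjoint union of covers obtained by adjoining $s^{1/e}$. If $x$ lies on two components, the cover is obtained by adjoining roots of the two local equations. There is a subtle point here: Abhyankar's lemma in codimension $2$ needs tameness at $x$ itself, that is, $e$ prime to the residue characteristic $p$ of $x$. I would therefore first argue globally that every $e_i=1$, as follows. Restrict to the generic fibre $\mathbb{P}^1_{\mathbb{Q}}$. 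The component $D_{a}\simeq\Spec\mathbb{Z}$ is a section. Along a component $D_a$ with index $e>1$, adjoining an $e$-th root of a local equation would produce a cover of $D_a\cong\Spec\mathbb{Z}$ that is ramified only at the primes $p$ where $D_a$ meets other components. Normal crossings means two sections $D_a$ and $D_b$ meet transversally. Comparing the inertia at an intersection point, the extension is a Kummer-type cover which should force a nontrivial unramified extension of $\mathbb{Q}$, contradicting Minkowski.

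Concretely, I would carry this out with the inertia groups. At a point $x\in D_a\cap D_b$ over $p$, normal crossings gives regular parameters $s,t$ with $D_a=\{s=0\}$, $D_b=\{t=0\}$. Each of these restricts on the other component to a uniformizer at $p$, because the intersection is transversal and reduced, hence of length one. The residue field of $Y$ above the generic point of $D_a$ contains $\mathbb{Q}(\zeta_{e})$-type data, and is unramified over $D_a$ away from the crossing points. Its ramification at $p$ would have to be tame by the local structure. The discriminant bound for such fields should then give a contradiction.

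\emph{Main obstacle.} The main obstacle is exactly this step: making Abhyankar applicable at closed points of residue characteristic $p$ dividing $e_i$ (wild possibilities), and turning the local picture into the global conclusion $e_i=1$. What I would try first is to show that the decomposition/inertia field above $D_a$ is a tamely ramified abelian extension of $\mathbb{Q}$. Each crossing point meets $D_a$ transversally, so it contributes at most tame ramification of order $\le$ the corresponding $e$ at a single prime. The crucial extra input is that inertia generators coming from the two branches commute and generate the local fundamental group. I would then use that $\mathbb{Q}$ has no nontrivial extension unramified everywhere, inducting on the number of components (for $r\le 2$, $U\cong\mathbb{G}_{m,\mathbb{Z}}$ or $\mathbb{A}^1_{\mathbb{Z}}$ up to $\PGL_2(\mathbb{Z})$, and these are handled directly). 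Once every $e_i=1$, Zariski–Nagata purity on the regular scheme $X$ gives that $Y\to X$ is étale, and the proof concludes as above.
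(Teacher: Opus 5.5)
\textbf{There is a genuine gap.} Your outer reduction is sound: if every ramification index $e_i$ along $D_{a_i}$ is $1$, then Zariski--Nagata purity makes $Y\to X$ étale, and $\EtFundGrp{\mathbb{P}^{1}_{\mathbb{Z}}}=1$ finishes the proof. This is equivalent to the paper's step of showing all inertia groups are trivial and then using Observation~\ref{lem:geometric-surjectivity}.

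The one substantive step, $e_i=1$, is not proved, and the obstacle you name comes from misreading Abhyankar's lemma. The absolute version (Theorem~\ref{thm:abhyankar-local}) assumes tameness only at the generic points of the components of $D$ (Definition~\ref{def:tame-codim1}). That is automatic here, because $\kappa(\eta_i)$ has characteristic $0$. Its conclusion is that, étale locally at every $x\in D$, the cover is a disjoint union of generalized Kummer coverings whose exponents are invertible near $x$. So the lemma \emph{yields}, rather than \emph{requires}, that $e_i$ is invertible in $\mathcal{O}_{D_i,y}$ for every $y\in D_i$ (Corollary~\ref{cor:corrected-prop-1-3}). Heuristically: since $D$ is horizontal, $U$ contains the generic point of the fibre over $p$, and extracting a $p$-th root of a local equation is inseparable there.

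Since $D_a\cong\Spec\mathbb{Z}$ has a closed point over every prime, $e_a$ is prime to every prime, so $e_a=1$. In the paper's terms, $\mathbb{L}(D_a)=\varnothing$, so $I_a=1$ by Corollary~\ref{cor:tame-inertia-pro-complement}. Then $\EtFundGrp{U}=1$ because it is generated by inertia (Observation~\ref{lem:geometric-surjectivity}). This is Theorem~\ref{thm:general-nonsolvable-obstruction} with $D$ normal crossings at all primes; compare the treatment of $D_\infty$ in Example~\ref{ex:p-square}. No global argument on $D_a$ is needed.

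Your global substitute is both circular and inconclusive.
\begin{itemize}
\item To know that the residue extension $\ell/\mathbb{Q}$ over the generic point of $D_a$ is unramified away from the crossing primes and tame at them, you invoke the local Kummer picture at points of residue characteristic $p$. That is exactly what you declared unavailable when $p\mid e$.
\item Even granting that picture, $\mu_e\subset\ell$ together with tame ramification at the crossing primes does not by itself contradict Minkowski or discriminant bounds. For an odd crossing prime $p$ and $e=p$, the field $\mathbb{Q}(\mu_p)$ is ramified only at $p$, and tamely. Your ``should force'' and ``should then give a contradiction'' sit precisely on this gap.
\item Your base case is also wrong. For $|A|=2$, $U$ need not be $\mathbb{G}_{m,\mathbb{Z}}$ up to $\PGL_2(\mathbb{Z})$, because $v_p(\Delta(a,b))$ is an invariant. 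For example, $D_0+D_p$ is normal crossings, but the fibre of its complement over $p$ is $\mathbb{A}^1_{\mathbb{F}_p}$, not $\mathbb{G}_{m,\mathbb{F}_p}$. The proposed induction on the number of components is likewise never made precise.
\end{itemize}
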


\bigskip

\noindent
Motivated by Saito's theorem, we ask whether its converse holds.
Unfortunately, this is false.
Indeed, if $p$ is an odd prime, then the divisor $D_{0}+D_{p^{2}}+D_{\infty}$ is not normal crossings at the prime~$p$.
However, the scheme $\mathbb{P}^{1}_{\mathbb{Z}} \setminus (D_{0}+D_{p^{2}}+D_{\infty})$ has trivial \'etale fundamental group (see Example~\ref{ex:p-square}).

\medskip

On the other hand, the preceding construction does not give a counterexample when $p=2$.
In fact, if $D$ is not normal crossings at the prime~$2$, we can construct a surjection $\EtFundGrp{\mathbb{P}^{1}_{\mathbb{Z}} \setminus D}\twoheadrightarrow \mathbb{Z}/2\mathbb{Z}$ (see Proposition~\ref{prop:only-if}).
In particular, we have the following:

\medskip

\begin{quote}\it
	If $\mathbb{P}^{1}_{\mathbb{Z}} \setminus D$ has trivial \'etale fundamental group, then $D$ is normal crossings \emph{at the prime~$2$}.
\end{quote}

\medskip

\noindent
These results suggest the following conjecture:

\begin{Iconjecture}\label{Iconjecture_NCat2}
	If $D$ is normal crossings at the prime~$2$, then $\mathbb{P}^{1}_{\mathbb{Z}} \setminus D$ has trivial \'etale fundamental group.
\end{Iconjecture}

\noindent
If the conjecture holds, then $\mathbb{P}^{1}_{\mathbb{Z}} \setminus D$ has trivial \'etale fundamental group if and only if $D$ is normal crossings at the prime~$2$.
Similar questions make sense over number fields and over rings of $S$-integers.
In the present paper, we restrict attention to $\mathbb{P}^{1}_{\mathbb{Z}}$ and prove several results toward Conjecture~\ref{Iconjecture_NCat2}.
Our first result is obtained by passing to the maximal prosolvable quotient.
For a profinite group $G$, write $G^{\mathrm{solv}}$ for its maximal prosolvable quotient.
The following theorem shows that Conjecture~\ref{Iconjecture_NCat2} holds at the solvable level:

\bigskip

\begin{Itheorem}[{Theorem~\ref{thm:solvable-quotient}}]\label{IntrothmB}
	If $D$ is normal crossings at the prime~$2$, then $\EtFundGrp{\mathbb{P}^{1}_{\mathbb{Z}} \setminus D}$ has no nontrivial finite solvable quotient.
	In particular,
	\begin{equation*}
		\EtFundGrp{\mathbb{P}^{1}_{\mathbb{Z}} \setminus D}^{\mathrm{solv}}=1,
	\end{equation*}
	if and only if $D$ is normal crossings at the prime~$2$.
\end{Itheorem}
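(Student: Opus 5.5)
The \emph{only if} direction is Proposition~\ref{prop:only-if}: if $D$ is not normal crossings at~$2$, it gives $\EtFundGrp{\mathbb{P}^{1}_{\mathbb{Z}}\setminus D}\twoheadrightarrow\mathbb{Z}/2\mathbb{Z}$. For the \emph{if} direction, a nontrivial finite solvable group has a nontrivial abelian quotient, and hence a quotient $\mathbb{Z}/\ell\mathbb{Z}$. So it suffices to show that $X\coloneq\mathbb{P}^{1}_{\mathbb{Z}}\setminus D$ has no connected cyclic étale cover of prime degree~$\ell$.

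\textbf{Step 1: restrict to the generic fibre.} Let $S=\{a_1,\dots,a_r\}$. Such a cover restricts to a cyclic cover of $\mathbb{P}^{1}_{\mathbb{Q}}\setminus S$. I will analyse $\pi_1^{\mathrm{ab}}$ of this curve using the homotopy exact sequence. The geometric abelianized group is $\widehat{\mathbb{Z}}(1)^{r}/\widehat{\mathbb{Z}}(1)$ (the inertia generators, with product relation). Its $G_{\mathbb{Q}}$-coinvariants are $\bigl(\widehat{\mathbb{Z}}(1)_{G_{\mathbb{Q}}}\bigr)^{r-1}$. Because the cyclotomic character surjects onto $\widehat{\mathbb{Z}}^{\times}$, the subgroup generated by all $\chi(\sigma)-1$ is $2\widehat{\mathbb{Z}}$, so $\widehat{\mathbb{Z}}(1)_{G_{\mathbb{Q}}}\cong\mathbb{Z}/2$. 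Using a rational point $a_1$ (or a tangential base point there) to split the sequence, every abelian cover is the compositum of two kinds of covers:
\begin{itemize}
\item a constant extension $K(t)$ with $K/\mathbb{Q}$ abelian;
\item quadratic covers $\mathbb{Q}(t)\bigl(\sqrt{c\prod_i\ell_i^{\varepsilon_i}}\bigr)$, where $\ell_i=v_it-u_i$, $\varepsilon_i\in\{0,1\}$, and $c\in\mathbb{Q}^{\times}$.
\end{itemize}
For $a_i=\infty$ I use the appropriate homogenization.

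\textbf{Step 2: use étaleness over $\mathbb{Z}$.} By Zariski--Nagata purity on the regular scheme $X$, a finite normal cover is étale iff it is unramified at the codimension-one points. The relevant ones are the generic points of the fibres $X_p$. After twisting, the constant part $K$ must be unramified at every prime, so $K=\mathbb{Q}$ by Minkowski. Hence only $\ell=2$ survives, and I may assume the cover is $\mathbb{Q}(t)(\sqrt{g})$ with $g=c\prod\ell_i^{\varepsilon_i}$, where $c$ is squarefree and not both $c=1$ and all $\varepsilon_i=0$.

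The $\ell_i$ are primitive, so they are units at the generic point of each $X_p$. Unramifiedness at odd $p$ therefore forces $p\nmid c$, so $c\in\{\pm1,\pm2\}$. At the generic point of $X_2$ (a DVR with uniformizer $2$ and residue field $\mathbb{F}_2(t)$), $c=\pm2$ is ramified. For a unit $u$ there, $\sqrt{u}$ is unramified iff $u\equiv w^{2}\pmod 4$ for some unit $w$. So everything reduces to showing the following: if $D$ is normal crossings at~$2$, then no choice of sign and no nonempty $\varepsilon$ (or $\varepsilon=0$ with $c=-1$, which gives $\mathbb{Q}(i)$ and is ramified) makes $\pm\prod\ell_i^{\varepsilon_i}$ a square mod~$4$ in $\mathbb{Z}[t]_{(2)}$.

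\textbf{Step 3: the local computation at~$2$, which I expect to be the main obstacle.} Normal crossings at~$2$ means the following. The reductions of the $a_i$ lie among the three points of $\mathbb{P}^{1}(\mathbb{F}_2)$, at most two components pass through each point, and two components through the same point satisfy $a_i\not\equiv a_j\pmod 4$ (transversality). First, the reduction mod~$2$ must be a square in $\mathbb{F}_2(t)$. This forces each residue point to carry an even number of chosen $\ell_i$, so the chosen ones come in pairs $\{a_i,a_j\}$ with $a_i\equiv a_j\pmod 2$. I would then write $\ell_i\ell_j=(t-\alpha)^2+2\beta(t-\alpha)+4(\dots)$, after normalizing coordinates at that point and treating $\infty$ via a Möbius change. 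Transversality ($a_i\not\equiv a_j\pmod 4$) forces the cross term $2\beta(t-\alpha)$ to have $\beta$ odd. I would then show that the combined product $\pm\prod(\text{pairs})$ is congruent to $w^2+2h$ modulo~$4$ with $h\not\equiv$ (square$+$square) mod~$2$. This needs the explicit criterion that $w^2+2h$ with $w,h\in\mathbb{F}_2[t]$ is a square mod~$4$ iff $h\equiv s^2+ws$ for some $s$; it is an Artin--Schreier-type condition in characteristic~$2$. Checking this for the at most three pairs, and for both signs, is a finite and explicit but delicate case analysis. It shows the cover must ramify along $X_2$. That contradiction proves the theorem.
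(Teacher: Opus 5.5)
Your Steps~1 and~2 are correct, and they take a genuinely different route from the paper. You classify the candidate cyclic covers explicitly: odd $\ell$ forces a constant extension, which Minkowski excludes, and $\ell=2$ forces $\mathbb{Q}(t)(\sqrt{g})$ with $g=\pm\prod\ell_i^{\varepsilon_i}$. You then test ramification at the generic point of the fibre over~$2$.

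The gap is Step~3, which is where all of the normal-crossings input must enter. There you neither carry out the computation nor state a correct criterion for it. Your Step~2 criterion is right: a unit $u$ of $R=\mathbb{Z}[t]_{(2)}$ gives an unramified extension iff $u\equiv w^2\pmod 4$ for a unit $w$. Its reformulation in Step~3 is wrong. If $x^2\equiv w^2+2h\pmod 4$, then $\bar x^2=\bar w^2$ in $\mathbb{F}_2(t)$, so $x\equiv w\pmod 2$ because square roots are unique in characteristic~$2$, and hence $x^2\equiv w^2\pmod 4$. So $w^2+2h$ is a square modulo~$4$ if and only if $h\equiv 0\pmod 2$. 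Your condition $h\equiv s^2+ws$ is the mod-$8$ condition for $w^2+4h$. For instance, $1+2(t^2+t)$ is not a square modulo~$4$, although your criterion says it is. In the cases that actually arise the two criteria happen to agree, but you would have had to check that.

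With the correct criterion, Step~3 is a short computation rather than a delicate case analysis.
\begin{itemize}
\item Suppose $a_i\equiv a_j\equiv\alpha\pmod 2$ with $\alpha\in\{0,1\}$ and $v_2(\Delta(a_i,a_j))=1$. Expanding $\ell_i\ell_j=v_iv_j(t-a_i)(t-a_j)$ gives $\ell_i\ell_j\equiv\pm(t-\alpha)^2\bigl(1+2(t-\alpha)^{-1}\bigr)\pmod 4$.
\item A transversal pair through the point at infinity contributes $\pm(1+2t)$. This includes a pair $\{a,\infty\}$, where only $\ell_a$ appears, since $\ell_a\equiv 2t-u_a$.
\item The sign satisfies $-1\equiv 1+2\pmod 4$.
\end{itemize}
Since $(1+2a)(1+2b)\equiv 1+2(a+b)\pmod 4$, you get $g\equiv W^2(1+2h)$ with $\bar h=\eta+e_0t^{-1}+e_1(t+1)^{-1}+e_\infty t\in\mathbb{F}_2(t)$. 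By uniqueness of partial fractions, $\bar h\neq 0$ unless $g$ is a square. Hence every nontrivial $g$ ramifies along the fibre at~$2$.

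The paper never classifies covers. It shows that every finite abelian quotient is an elementary abelian $2$-group, because complex conjugation acts by $-1$ on $\widehat{\mathbb{Z}}(1)^{r-1}$. Abhyankar's lemma at the points of $D_A$ over~$2$ (Lemma~\ref{lem:good-prime-inertia-UA}) then makes each inertia group $I_a(G)$ of odd order, hence trivial, and these groups generate $G$.

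Your corrected argument is more elementary and self-contained; in effect it verifies Abhyankar's lemma by hand for quadratic Kummer covers. The paper's inertia argument is the one that carries over to the non-solvable obstructions of Section~\ref{sec:simple-quotients}.
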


\bigskip

\noindent
As an immediate consequence, Conjecture~\ref{Iconjecture_NCat2} holds if $r\notin\{3,4,5,6\}$.
Indeed, the case $r=2$ is controlled by the solvable quotient, and the case $r\geq 7$ is excluded by the observation that a divisor that is normal crossings at the prime~$2$ must satisfy $r\leq 6$ (see Corollary~\ref{cor:easy-range}).

\medskip

If $D$ is normal crossings at the prime~$2$ and $\EtFundGrp{\mathbb{P}^{1}_{\mathbb{Z}} \setminus D}$ is nontrivial, then it has a nontrivial finite quotient of minimal order. By Theorem~\ref{IntrothmB}, such a quotient is non-solvable, and by minimality it is a non-abelian finite simple group.
Thus, after proving Theorem~\ref{IntrothmB}, the remaining part of Conjecture~\ref{Iconjecture_NCat2} is reduced to excluding non-abelian finite simple quotients.
We first record the following uniform obstruction in terms of the primes at which $D$ is normal crossings (see Theorem~\ref{thm:general-nonsolvable-obstruction}):

\medskip

\begin{quote}\it
	Let $G$ be a nontrivial finite group, and write $o(G)$ for the set of prime divisors of $\CardinalityOfSet{G}$.
	Assume that $D$ is normal crossings at every prime in $o(G)$.
	Then there is no surjective homomorphism
	\begin{equation*}
		\EtFundGrp{\mathbb{P}^{1}_{\mathbb{Z}} \setminus D}\twoheadrightarrow G.
	\end{equation*}
\end{quote}

\medskip

\noindent
A useful consequence of the Feit--Thompson odd-order theorem is that every finite non-solvable group has even order; in particular, $2\in o(G)$ for every such group $G$.
Let us refine this observation for several small simple groups.
An important fact from finite group theory is that every finite non-solvable group has a non-abelian finite simple composition factor.
The classification of finite simple groups is as follows:
\begin{itemize}
	\item cyclic groups of prime order;
	\item alternating groups $A_{n}$ for $n\geq 5$;
	\item finite simple groups of Lie type;
	\item the $26$ sporadic simple groups.
\end{itemize}
From this point of view, we begin with the smallest non-abelian simple groups.
In the present paper, we focus on the family of projective special linear groups $\PSL_{2}(q)$ over $\mathbb{F}_{q}$.
(Note that they are simple if and only if $q\geq 4$; see, for instance,~\cite[Theorem~24.17 and Example~24.22]{MR2850737}.)
In particular, since there are isomorphisms
\begin{equation*}
	\PSL_{2}(4)\cong A_{5},\qquad \PSL_{2}(9)\cong A_{6},
\end{equation*}
our results include the cases of $A_{5}$ and $A_{6}$.

\bigskip

\begin{Itheorem}[{Theorem~\ref{thm:small-simple-obstruction}}]\label{IthmD}
	Assume that $D$ is normal crossings at the prime~$2$.
	Assume moreover that either $r=3$ or $D$ is normal crossings at the prime~$3$.
	Let $q\geq 4$ satisfy one of the following conditions:
	\begin{enumerate}[(a)]
		\item $q=2^{f}$ for some $f\geq 2$;
		\item $q=3^{f}$ for some $f\geq 2$;
		\item $q$ is an odd prime power such that $2$ is not a square in $\mathbb{F}_{q}$.
	\end{enumerate}
	Then there is no surjective homomorphism
	\begin{equation*}
		\EtFundGrp{\mathbb{P}^{1}_{\mathbb{Z}} \setminus D}\twoheadrightarrow \PSL_{2}(q).
	\end{equation*}
\end{Itheorem}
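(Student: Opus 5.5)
The plan is to reduce the statement to a problem about generating tuples with rational conjugacy classes in $G=\PSL_{2}(q)$, using the local results at the primes~$2$ and~$3$ that underlie Theorem~\ref{thm:general-nonsolvable-obstruction}. Suppose $\phi\colon\EtFundGrp{\mathbb{P}^{1}_{\mathbb{Z}}\setminus D}\twoheadrightarrow G$ is surjective.

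First I would pass to the geometric fibre. The image of the geometric fundamental group of $\mathbb{P}^{1}_{\FieldAlgeClosure{\mathbb{Q}}}\setminus\{a_1,\dots,a_r\}$ is a normal subgroup $N$ of $G$. If $N=1$, then $\phi$ factors through a cover of $\Spec\mathbb{Z}$ obtained by base change. Since $\Spec\mathbb{Z}$ is simply connected (Minkowski), this is impossible. As $G$ is simple, $N=G$. Consequently $G$ is generated by $g_1,\dots,g_r$ with $g_1\cdots g_r=1$, where $g_i$ generates the image of an inertia group at $D_{a_i}$.

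Since each $a_i$ is rational, $G_{\mathbb{Q}}$ acts on inertia through the cyclotomic character. Hence the conjugacy class $C_i$ of $g_i$ is rational: $g_i^{m}\sim g_i$ for every $m$ prime to $\operatorname{ord}(g_i)$.

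Next comes the local input. At a prime $p$ where $D$ is normal crossings, I would invoke the local argument behind Theorem~\ref{thm:general-nonsolvable-obstruction} (Abhyankar's lemma along the normal crossings divisor at $p$, plus purity). This should force each $\operatorname{ord}(g_i)$ to be prime to $p$: a nontrivial $p$-part would produce ramification along the fibre at $p$ that the normal crossings structure forbids. So all $g_i$ have odd order, and in the case where $D$ is normal crossings at $3$ also order prime to $3$. Where this argument needs extra local information at~$2$ (decomposition group, Frobenius), I expect condition (c) to enter. The relevant fact is that, by the ramification of $\mathbb{Q}(\sqrt{2})$, the square class of $2$ in $\mathbb{F}_{q}$ governs which unipotent class can appear.

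The group-theoretic step classifies rational classes of elements of $G$ of odd order.
\begin{itemize}
\item For a semisimple $x$ of order $d\mid (q\pm1)/2$, we have $x^{m}\sim x$ only when $m\equiv\pm1 \pmod d$. Rationality therefore forces $(\mathbb{Z}/d)^{\times}=\{\pm1\}$, so $d=3$ (as $d$ is odd).
\item For a unipotent $x$ of order $p$ with $q=p^{f}$ odd, $x^{m}\sim x$ iff $m$ is a square in $\mathbb{F}_{q}$. The class is rational iff $\mathbb{F}_p^{\times}\subset(\mathbb{F}_q^{\times})^{2}$, i.e.\ $f$ even.
\item For $q=2^{f}$ there are no odd-order unipotents, so only order-$3$ elements survive.
\end{itemize}
In cases (a), (b), (c) I would then enumerate the surviving possibilities.
\begin{itemize}
\item If $D$ is normal crossings at $3$, order-$3$ elements are excluded. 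In (a) nothing remains. In (b) unipotents have order $3$, so nothing remains. In (c) the only candidates are rational unipotent classes, which I expect to rule out using condition (c) together with the Frobenius-at-$2$ input.
\item If instead $r=3$, we obtain a generating triple of elements of order $3$ (semisimple or unipotent) with product $1$. Such a triple generates a quotient of the $(3,3,3)$ triangle group, which is solvable (Euclidean). This contradicts the simplicity of $G$. Any remaining unipotent case is again handled by Riemann--Hurwitz or by the orders involved.
\end{itemize}

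I expect the main obstacle to be the local step: making precise that normal crossings at $p$ forces the inertia images to have order prime to $p$, and extracting the extra arithmetic constraint needed for condition (c). I would attempt this first by localizing at the generic point of the fibre at $p$ and applying Abhyankar's lemma to the tame part, then comparing with the Kummer description.
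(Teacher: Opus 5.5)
Your skeleton is the paper's: the quotient is generated by boundary inertia with $\gamma_1\cdots\gamma_r=1$; inertia generators are $\mathbb{Q}$-rational; Lemma~\ref{lem:good-prime-inertia-UA} at $p=2$ (and $p=3$) gives odd order (prime to $3$); you classify odd-order rational elements; and for $r=3$ you use solvability of $\Delta(3,3,3)$. The local step you call the main obstacle is exactly Lemma~\ref{lem:good-prime-inertia-UA}. There Abhyankar's lemma is applied at the closed point of $D_a$ over $p$, not at the generic point of the fibre.

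\textbf{The gap is case~(c).} You leave rational unipotent inertia generators open. You propose to exclude them with a ``Frobenius-at-$2$'' input tied to $\mathbb{Q}(\sqrt{2})$, and for $r=3$ with ``Riemann--Hurwitz or the orders involved.'' No such arithmetic input exists or is needed, and the fallback fails. The case $q=5$ satisfies (c), and $A_5\cong\PSL_{2}(5)$ is generated by $(1\,2\,3)$ and $(3\,4\,5)$. Their product is a $5$-cycle, which is a unipotent element. This gives a generating triple of odd orders $(3,3,5)$ with product $1$. It is hyperbolic, so Riemann--Hurwitz gives no obstruction. Orders alone do not exclude it either, since for $r=3$ you have no normal crossings hypothesis at $5$.

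What excludes it is rationality with exponent $m=2$, which is coprime to the odd order $p$. A rational unipotent $g$ must satisfy $g\sim g^{2}$. Since $u(a)^{2}=u(2a)$, this holds in $\PSL_{2}(q)$ only if $2\in(\mathbb{F}_{q}^{\times})^{2}$, contradicting (c). This is already your second bullet with $m=2$. Equivalently, (c) forces $f$ odd, so your own criterion (``rational iff $f$ even'') shows there are no rational unipotents at all. So the $2$ in condition (c) is purely group-theoretic: it is the smallest admissible exponent, not the prime $2$ of $\mathbb{Z}$.

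With this, every nontrivial inertia generator has order $3$ in all of (a)--(c), and both branches close as in the paper. Normal crossings at $3$ then forces all inertia images to be trivial. For $r=3$, the quotient factors through the profinite completion of $\Delta(3,3,3)$, which is prosolvable.
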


\bigskip

The paper is organized as follows.
In Section~\ref{sec:abhyankar}, we review Abhyankar's lemma.
In Section~\ref{sec:triviality}, we study divisors on $\mathbb{P}^{1}_{\mathbb{Z}}$ and establish Theorem~\ref{IntrothmB}.
In Section~\ref{sec:simple-quotients}, we study finite non-solvable quotients and prove Theorem~\ref{IthmD}.

\section{\'etale fundamental groups and Abhyankar's lemma}
\label{sec:abhyankar}

We first recall the \'etale fundamental group and its boundary inertia groups.
Let $X$ be a connected, locally Noetherian, and normal scheme.
Let $D$ be a reduced effective Cartier divisor on $X$, and set $U \coloneq X \setminus D$.
By~\cite{MR0354651}, the category of \'etale coverings of $U$ is a Galois category.
Hence, for a geometric point $\bar\eta \to U$, we denote the associated profinite group by
\begin{equation*}
	\EtFundGrpWithPt{U}{\bar\eta}
\end{equation*}
and call it the \emph{\'etale fundamental group} of $U$.
Since this group is independent of the base point up to inner automorphism, we fix a geometric point $\bar\eta \to U$ and often omit $\bar\eta$ from the notation.
Let $D_i$ be an irreducible component of $D$, and let $\eta_i$ be its generic point.
We write $\mathcal{O}_{X,\eta_i}^{\mathrm{sh}}$ for the strict henselization of $\mathcal{O}_{X,\eta_i}$ with respect to a geometric point over $\eta_i$, and set $K_i^{\mathrm{sh}} \coloneq \Frac(\mathcal{O}_{X,\eta_i}^{\mathrm{sh}})$.
Then
\begin{equation*}
	\Spec(K_i^{\mathrm{sh}})
	=
	\Spec(\mathcal{O}_{X,\eta_i}^{\mathrm{sh}})
	\setminus \{\text{closed point}\}.
\end{equation*}
Choose a geometric point $\bar\eta_i \to \Spec(K_i^{\mathrm{sh}})$ and a geometric path from its image in $U$ to $\bar\eta$.
The induced morphism $\Spec(K_i^{\mathrm{sh}}) \to U$ gives a continuous homomorphism
\begin{equation*}
	\EtFundGrpWithPt{\Spec(K_i^{\mathrm{sh}})}{\bar\eta_i}
	\to
	\EtFundGrp{U}.
\end{equation*}
We call the image of this homomorphism the \emph{inertia group $I_{D_i}$ of $\EtFundGrp{U}$ attached to $D_i$}.
This subgroup is well defined up to conjugacy.
If $G$ is a finite quotient of $\EtFundGrp{U}$, we denote by $I_{D_i}(G)\subset G$ the image of an inertia subgroup attached to $D_i$.
\medskip

Next, we recall Abhyankar's lemma and some of its consequences, which are key ingredients in the later sections.
We begin by recalling the notions of normal crossings and codimension-$1$ tame ramification.
For related approaches to tame ramification on arithmetic schemes and comparisons among different notions of tameness, see Schmidt~\cite{MR1883386} and Kerz--Schmidt~\cite{MR2578565}.

\begin{definition}[{\cite[Definition~1.8.2]{MR316453}}]
	\label{def:snc}
	Let $X$ be a locally Noetherian scheme.
	An effective Cartier divisor $D$ on $X$ is called a \emph{strict normal crossings divisor} if, for every point $x \in D$, the local ring $\mathcal{O}_{X,x}$ is regular and there exist a regular system of parameters $t_1, \dots, t_d \in \mathfrak{m}_x$ and an integer $1 \leq r \leq d$ such that the ideal of $D$ in $\mathcal{O}_{X,x}$ is generated by $t_1 \cdots t_r$.
	Moreover, an effective Cartier divisor $D \subset X$ is called a \emph{normal crossings divisor} if, for every point $x \in D$, there exists an \'{e}tale morphism $U \to X$ whose image contains $x$ such that $D \times_X U$ is a strict normal crossings divisor on $U$.
\end{definition}

\begin{definition}[{\cite[Definition~2.2.2]{MR316453}}]
	\label{def:tame-codim1}
	Let $X$ be a connected, locally Noetherian, and normal scheme.
	Let $D$ be a normal crossings divisor on $X$, and set $U \coloneq X \setminus D$.
	A \emph{tame covering of $(X,D)$} is a finite morphism $f \colon Y \to X$ such that
	\begin{enumerate}[(i)]
		\item $Y$ is normal;
		\item the restriction $Y \times_X U \to U$ is \'{e}tale;
		\item for every irreducible component $D_i$ of $D$, the morphism $f$ is tamely ramified with respect to $\mathcal{O}_{X,\eta_{D_i}}$.
	\end{enumerate}
\end{definition}

\begin{theorem}[Absolute Abhyankar's lemma {\cite[APPENDICE I, Proposition~5.2]{MR0354651}}]
	\label{thm:abhyankar-local}
	We keep the notation of Definition~\ref{def:tame-codim1}.
	Let $f \colon Y \to X$ be a tame covering of $(X,D)$.
	Then, for every point $x \in X$, there exists an \'{e}tale neighbourhood $X' \to X$ of $x$ such that the pullback divisor $D' \coloneq D \times_X X'$ can be written as
	\begin{equation*}
		D' = D_1' + \cdots + D_r',
	\end{equation*}
	where $D_1', \dots, D_r'$ are irreducible regular divisors with strict normal crossings on $X'$, and such that $Y \times_X X'$ is a finite disjoint union of generalized Kummer coverings relative to $D_1', \dots, D_r'$ (see~\cite[Definition~1.3.8]{MR316453}).
\end{theorem}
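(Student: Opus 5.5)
This is the absolute form of Abhyankar's lemma from SGA~1, so the plan is to reduce to the classical local statement (SGA~1, Exp.~XIII, 5.2--5.3) via strict henselization and limit arguments.

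First, we localize. Since $D$ is a normal crossings divisor, Definition~\ref{def:snc} provides an \'etale neighbourhood of $x$ on which $D$ becomes strict normal crossings. Shrinking it, we may assume that its local ring at a point over $x$ is regular, with a regular system of parameters $t_1,\dots,t_d$ such that $D$ is cut out by $t_1\cdots t_r$. Then we pass to the strict henselization $A=\mathcal{O}_{X,x}^{\mathrm{sh}}$. This is a regular local ring, and $D_i'=V(t_i)$ are irreducible regular divisors meeting with strict normal crossings.

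Second, we treat the local situation over $A$. Let $B$ be a connected component of the normalization of $A$ in $Y\times_X\Spec A$, with fraction field $L$ over $K=\Frac(A)$. By tameness along each $D_i$, the ramification index $e_i$ of $L/K$ at the height-one prime $(t_i)$ is prime to the residue characteristic. Hence $t_i^{1/e_i}$ can be adjoined \'etale-locally, since $A$ is strictly henselian and contains the $e_i$-th roots of unity. We set
\begin{equation*}
	A' = A[T_1,\dots,T_r]/(T_1^{e_1}-t_1,\dots,T_r^{e_r}-t_r).
\end{equation*}
This is again regular local, because the $T_i$ together with $t_{r+1},\dots,t_d$ form a regular system of parameters. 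The classical Abhyankar argument then shows that the compositum $LK'$, where $K'=\Frac(A')$, is unramified over $A'$ in codimension one: the ramification indices divide $e_i$, and Abhyankar's lemma for discrete valuation rings kills the ramification. By Zariski--Nagata purity of the branch locus on the regular scheme $\Spec A'$, this compositum is then \'etale over $A'$. Since $A'$ is strictly henselian, it is trivial. Therefore $L\subset K'$, and $B$ is the normalization of $A$ in a subextension of a Kummer extension. This is exactly a generalized Kummer covering in the sense of \cite[Definition~1.3.8]{MR316453}, i.e.\ a quotient of the standard one by a subgroup of $\prod\mu_{e_i}$.

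Third, we descend from $A$ to an honest \'etale neighbourhood. Since $A$ is a filtered colimit of local rings of \'etale neighbourhoods, and $Y$ is finite of finite presentation over $X$ (which holds in the locally Noetherian normal setting), the finitely many components of $Y\times_X\Spec A$ and their isomorphisms with generalized Kummer coverings descend to some \'etale neighbourhood $X'$, by standard limit arguments (EGA~IV, \S8). The irreducibility and regularity of the $D_i'$ can be arranged after further shrinking.

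The main obstacle is the second step, specifically the use of purity to conclude that $LK'/K'$ is unramified everywhere and not only in codimension one. I would rely on Zariski--Nagata purity for regular local rings, which SGA~1 Exp.~X, 3.4 provides. In this paper it suffices to cite \cite[APPENDICE I, Proposition~5.2]{MR0354651} directly, as the statement does.
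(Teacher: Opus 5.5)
Your outline follows the standard SGA~1 strategy, but there is a genuine gap. It sits in the sentence asserting that, by tameness along $D_i$, the index $e_i$ is prime to the residue characteristic, so that $A$ contains $\mu_{e_i}$ and $t_i^{1/e_i}$ can be adjoined \'etale-locally.

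In Definition~\ref{def:tame-codim1}, tameness is imposed only at the generic point $\eta_i$ of $D_i$. So it gives $\operatorname{char}\kappa(\eta_i)\nmid e_i$, whereas you use $p\nmid e_i$ for $p=\operatorname{char}\kappa(x)$. In mixed characteristic these differ. In exactly the situation of this paper (horizontal $D$ over $\Spec(\mathbb{Z})$) one has $\operatorname{char}\kappa(\eta_i)=0$, so the tameness hypothesis is vacuous. If $p\mid e_i$, three things break: $\mu_{e_i}\not\subset A$, the cover $A[T]/(T^{e_i}-t_i)$ is not \'etale over $U$, and a subfield of $K(t_1^{1/e_1},\dots,t_r^{1/e_r})$ is not a generalized Kummer covering in the sense of Grothendieck--Murre, whose exponents must be invertible.

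The invertibility of $e_i$ at every point of $D_i$ is a \emph{conclusion} of the theorem. It is precisely Corollary~\ref{cor:corrected-prop-1-3}, on which Corollary~\ref{cor:tame-inertia-pro-complement} and Example~\ref{ex:p-square} rest. So your sketch assumes the part of the statement the paper actually uses. This step genuinely depends on normal crossings at $x$: Proposition~\ref{prop:only-if} produces ramification index $2$ along a horizontal component at a non-normal-crossings point over $2$. The real obstacle is therefore here, not in purity, which is a citation.

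The missing step can be supplied inside your framework. Your second step survives without assuming $p\nmid e_i$:
\begin{itemize}
\item $A'=A[T_i]/(T_i^{e_i}-t_i)$ is regular and strictly henselian for any $e_i$;
\item Abhyankar's lemma over the discrete valuation ring $A_{(t_i)}$ needs only tameness at $\eta_i$;
\item over height-one primes of $U$ the compositum is unramified by base change.
\end{itemize}
So purity still gives $L\subset K'$. What must be added is an argument that $p\nmid e_i$.

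First base change along the honest (\'etale over $U$) Kummer cover with exponents $m_i$, the prime-to-$p$ parts of the $e_i$, and replace $L$ by a component of $L\otimes_K K''$. Afterwards every ramification index along the $t_i$ is a power of $p$. Only components with $\operatorname{char}\kappa(\eta_i)=0$ carry nontrivial exponents, since tameness kills the others. The argument above then gives $L\subset K'$ with $[L:K]$ a power of $p$.

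If $\operatorname{char}A=p$, you are already done by tameness at $\eta_i$. Otherwise choose a height-one prime $P\ni p$. Since $B=A'\cap L$, the quotient $A'_P/B_P$ is torsion free, so $B_P/PB_P\hookrightarrow A'_P/PA'_P$. The target is local with residue field purely inseparable over $\kappa(P)$, because the $t_i$ carrying nontrivial exponents are units at $P$. Since $B_P$ is \'etale or tamely ramified over $A_P$, there is a single prime $Q$ over $P$, and $\kappa(Q)=\kappa(P)$. Hence $[L:K]=e(Q/P)$ is prime to $p$, so $L=K$.

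Thus $L\subset K(t_1^{1/m_1},\dots,t_r^{1/m_r})$ with every $m_i$ invertible, and your descent step finishes the proof. A minor point: for $x\notin D$ the local ring need not be regular, but there $Y$ is \'etale and splits over $\mathcal{O}^{\mathrm{sh}}_{X,x}$.
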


\begin{corollary}
	\label{cor:corrected-prop-1-3}
	We keep the notation of Definition~\ref{def:tame-codim1}.
	Let $f \colon Y \to X$ be a tame covering of $(X,D)$.
	Fix an irreducible component $D_i$ of $D$ with generic point $\eta_i$.
	Set $K_i \coloneq \Frac(\mathcal{O}_{X,\eta_i})$.
	Let $L/K_i$ be the function field of an irreducible component of $Y \times_X \Spec(K_i)$, and let $B$ be the integral closure of $\mathcal{O}_{X,\eta_i}$ in $L$.
	Then, for every maximal ideal $\mathfrak{n} \subset B$ and every point $y \in D_i$, the ramification index $e(\mathfrak{n}/\eta_i)$ is invertible in $\mathcal{O}_{D_i,y}$.
\end{corollary}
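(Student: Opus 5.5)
The plan is to deduce the corollary from Theorem~\ref{thm:abhyankar-local} applied at the point $y\in D_i$, together with the explicit form of generalized Kummer coverings. The ramification index $e(\mathfrak{n}/\eta_i)$ depends only on the extension $L/K_i$ and the prime $\mathfrak{n}$ over $\eta_i$. It is therefore unchanged under étale base change, in the sense that the ramification indices of the primes lying over the generic points of the components of $D_i\times_X X'$ in the pulled-back covering coincide with those of the original primes of $B$. Concretely, given an étale neighbourhood $X'\to X$ of $y$, the local ring of $X'$ at a generic point $\eta'$ of a component of $D_i\times_X X'$ lying over $\eta_i$ is a DVR, étale over $\mathcal{O}_{X,\eta_i}$ with uniformizer preserved. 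Normalization commutes with étale base change, so the primes of $B\otimes\mathcal{O}_{X',\eta'}$ over $\eta'$ have the same ramification indices as the corresponding primes $\mathfrak{n}$ of $B$. Moreover, every $\mathfrak{n}$ appears this way, because $\Spec B\to\Spec\mathcal{O}_{X,\eta_i}$ is finite and the closed fibre maps onto $\eta_i$, which is in the image of $X'$ since $y\in\overline{\{\eta_i\}}$ lies in the image.

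The steps, in order:

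\begin{enumerate}[(1)]
\item Apply Theorem~\ref{thm:abhyankar-local} at $y$ to obtain $X'\to X$ étale with a point $y'$ over $y$, such that $Y\times_X X'$ is a disjoint union of generalized Kummer coverings relative to $D_1',\dots,D_r'$.
\item By \cite[Definition~1.3.8]{MR316453}, such a covering is locally of the form
\begin{equation*}
X'[T_1,\dots,T_r]/(T_j^{n_j}-u_jt_j),
\end{equation*}
where $t_j$ is a local equation of $D_j'$, the $u_j$ are units, and each $n_j$ is invertible on $X'$. Alternatively, it is a quotient of such a covering by a subgroup of $\prod_j\mu_{n_j}$.
\item Over the generic point of the component $D_j'$ lying over $D_i$, the ramification index divides $n_j$, or equals it up to the passage to a quotient. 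Hence it is invertible on $X'$ near $y'$, in particular in $\mathcal{O}_{D_j',y'}$.
\item Since $\mathcal{O}_{D_i,y}\to\mathcal{O}_{D_j',y'}$ is faithfully flat (it is étale local), an integer is invertible in $\mathcal{O}_{D_i,y}$ if it is invertible in $\mathcal{O}_{D_j',y'}$. Indeed, invertibility of an integer $e$ means that the residue characteristic does not divide $e$, and the two local rings have the same residue characteristic.
\end{enumerate}

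The main obstacle is the bookkeeping in step (1). One has to check carefully that every prime $\mathfrak{n}$ of $B$ is seen after the étale base change near $y$, and that the ramification indices match. This is where the ``corrected'' form of the statement matters: the conclusion concerns invertibility in $\mathcal{O}_{D_i,y}$, that is, the characteristic of $\kappa(y)$, rather than at the generic point $\eta_i$. So it is genuinely necessary to use the local structure at $y$ and not merely tameness at $\eta_i$.
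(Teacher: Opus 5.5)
Your proposal is correct and follows the same route as the paper, which simply states that the corollary follows directly from Theorem~\ref{thm:abhyankar-local}. You supply the bookkeeping the paper omits: that $e(\mathfrak{n}/\eta_i)$ is unchanged under the essentially \'etale base change $\mathcal{O}_{X,\eta_i}\to\mathcal{O}_{X',\eta'}$, that every $\mathfrak{n}$ is seen after that base change, and that $e\mid n_j$ for a (generalized) Kummer covering. One small correction: the faithful-flatness claim in step~(4) is unnecessary and can fail when $D_i$ has several branches at $y$, but your actual justification, that the two local rings have the same residue characteristic, is all that is needed.
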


\begin{proof}
	This follows directly from Theorem~\ref{thm:abhyankar-local}.
\end{proof}

\begin{corollary}
	\label{cor:tame-inertia-pro-complement}
	We keep the notation of Definition~\ref{def:tame-codim1}.
	Let $\mathfrak{Primes}$ denote the set of all primes, and set
	\begin{equation*}
		\mathbb{L}(D_i)
		\coloneq \mathfrak{Primes}\setminus
		\{\operatorname{char}(\kappa(s)) \mid s\in D_i \text{ is a closed point}\}.
	\end{equation*}
	Assume that $X$ is Nagata and that $D$ is horizontal over $\Spec(\mathbb{Z})$, i.e., every irreducible component of $D$ dominates $\Spec(\mathbb{Z})$.
	Then the inertia subgroup $I_{D_i}$ is a pro-$\mathbb{L}(D_i)$ group for every $i$.
\end{corollary}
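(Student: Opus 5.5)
The plan is to reduce to finite quotients and then apply Corollary~\ref{cor:corrected-prop-1-3} at every closed point of $D_i$. Since $I_{D_i}$ is a closed subgroup of the profinite group $\EtFundGrp{U}$, it suffices to show the following. For every open normal subgroup $N\subset \EtFundGrp{U}$, with finite quotient $G=\EtFundGrp{U}/N$, the image $I_{D_i}(G)$ has order divisible only by primes in $\mathbb{L}(D_i)$. Indeed, a profinite group is pro-$\mathbb{L}$ exactly when all its finite continuous quotients are $\mathbb{L}$-groups, and the images of $I_{D_i}$ in the various $G$ determine $I_{D_i}$ as an inverse limit.

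Fix such a $G$ and the corresponding connected Galois étale cover $V\to U$. Let $Y$ be the normalization of $X$ in the function field of $V$. Because $X$ is Nagata, $Y\to X$ is finite, $Y$ is normal, and $Y\times_X U=V$ is étale over $U$. So conditions (i) and (ii) of Definition~\ref{def:tame-codim1} hold.

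For condition (iii), look at the generic point $\eta_i$. Since $D_i$ is horizontal, $\eta_i$ lies over the generic point of $\Spec(\mathbb{Z})$, so the residue field $\kappa(\eta_i)$ has characteristic $0$. Any finite extension of the discrete valuation ring $\mathcal{O}_{X,\eta_i}$ is then tamely ramified: ramification indices are automatically invertible and residue extensions are separable. Thus $Y\to X$ is a tame covering of $(X,D)$. This is where horizontality is used, and it is the step where I would be careful that the definition of tameness in \cite{MR316453} requires nothing beyond this in residue characteristic zero.

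Now identify $I_{D_i}(G)$ with an inertia group in the extension $L/K_i$. By construction, $I_{D_i}(G)$ is the image of $\EtFundGrp{\Spec(K_i^{\mathrm{sh}})}$. This image is the inertia group $I(\mathfrak{n}/\eta_i)$ of a maximal ideal $\mathfrak{n}$ of the integral closure $B$ of $\mathcal{O}_{X,\eta_i}$ in the function field $L$ of $Y$ (Galois over $K_i$ with group $G$), up to conjugacy. In residue characteristic $0$ the residue extension is separable, so $\CardinalityOfSet{I(\mathfrak{n}/\eta_i)}=e(\mathfrak{n}/\eta_i)$.

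Corollary~\ref{cor:corrected-prop-1-3} says that $e=e(\mathfrak{n}/\eta_i)$ is invertible in $\mathcal{O}_{D_i,y}$ for every $y\in D_i$. Take $y=s$ a closed point of $D_i$, with $p=\operatorname{char}\kappa(s)$. Invertibility of $e$ in the local ring $\mathcal{O}_{D_i,s}$ means $e$ does not lie in the maximal ideal, so $e\notin \mathfrak{m}_s$. Since $p\in\mathfrak{m}_s$, this forces $p\nmid e$. Doing this for all closed points shows that every prime dividing $\CardinalityOfSet{I_{D_i}(G)}$ lies in $\mathbb{L}(D_i)$.

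I expect the main obstacle to be the identification of the image of $\EtFundGrp{\Spec(K_i^{\mathrm{sh}})}$ in $G$ with the inertia group at $\mathfrak{n}$, together with the equality of its order with $e$. This is standard: the strict henselization realizes the maximal unramified extension, so Galois theory of henselian discrete valuation rings applies, and the residue field is perfect.
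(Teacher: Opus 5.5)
Your proposal is correct and follows essentially the same route as the paper's proof. Both arguments use the same three steps: the normalization of $X$ in the Galois cover is a tame covering, because $X$ is Nagata and the generic points of the horizontal components have residue characteristic $0$; the image $I_{D_i}(G)$ is identified with the inertia group, of order $e(\mathfrak{n}/\eta_i)$; and Corollary~\ref{cor:corrected-prop-1-3} is applied at a closed point of $D_i$ of residue characteristic $p$. You spell out the reduction to finite quotients and the implication from invertibility in $\mathcal{O}_{D_i,s}$ to $p\nmid e$ a little more explicitly than the paper does.
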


\begin{proof}
    We may assume that $D$ is non-empty.
	Under the assumptions, the normalization $Y\to X$ of $X$ in any finite \'etale covering $V\to U$ is a tame covering of $(X,D)$.
	Indeed, since $X$ is Nagata, the normalization of $X$ in such a covering is finite over $X$ by~\cite[Tag~035S]{StacksProjectBook}; moreover, at the generic point of each irreducible component of $D$, the residue characteristic is $0$, hence every finite separable extension is tamely ramified.

	Let $N \unlhd \EtFundGrp{U}$ be an open normal subgroup with quotient $G \coloneq \EtFundGrp{U}/N$.
	Let $V\to U$ be the connected Galois covering corresponding to $G$, and let $Y\to X$ be the normalization of $X$ in $V$.
	By definition of $I_{D_i}$, its image $I_{D_i}(G)\subset G$ is the inertia subgroup of this covering along the discrete valuation ring $\mathcal{O}_{X,\eta_i}$.
	In particular, its order is equal to the ramification index $e(\mathfrak{n}/\eta_i)$ for a prime $\mathfrak{n}$ of the normalization of $\mathcal{O}_{X,\eta_i}$ in a field factor of $Y \times_X \Spec(K_i)$.

	Let $p$ be a prime such that $p\notin \mathbb{L}(D_i)$.
	By definition of $\mathbb{L}(D_i)$, there exists a closed point $s\in D_i$ with $\operatorname{char}(\kappa(s))=p$.
	By Corollary~\ref{cor:corrected-prop-1-3}, the ramification index $e(\mathfrak{n}/\eta_i)$ is invertible in $\mathcal{O}_{D_i,s}$.
	Hence $p\nmid e(\mathfrak{n}/\eta_i)$, and therefore $p$ does not divide $\CardinalityOfSet{I_{D_i}(G)}$.
	Since $p\notin\mathbb{L}(D_i)$ and $N$ are arbitrary, we conclude that $I_{D_i}$ is a pro-$\mathbb{L}(D_i)$ group.
\end{proof}

\section{Triviality and the normal crossings condition}\label{sec:triviality}

In this section, we discuss Conjecture~\ref{Iconjecture_NCat2}.
For each $a\in \mathbb{P}^{1}_{\mathbb{Q}}(\mathbb{Q})$, let $D_a\subset \mathbb{P}^{1}_{\mathbb{Z}}$ denote the Zariski closure of $a$.
That is, if $a=\frac{a_u}{a_d}\in \mathbb{Q}$ is written in lowest terms with $a_d\in\mathbb{Z}_{>0}$, then $D_a$ is defined by
\begin{equation*}
	V(a_dT_0-a_uT_1) \subset \Proj(\mathbb{Z}[T_0,T_1])=\mathbb{P}^{1}_{\mathbb{Z}}.
\end{equation*}
We write $D_{\infty}$ for the divisor at infinity, that is, $D_{\infty}=V(T_1)$.
Let $A$ be a finite subset of $\mathbb{P}^{1}_{\mathbb{Q}}(\mathbb{Q})$.
We write
\begin{equation*}
	D_A \coloneq \sum_{a\in A}D_a,\qquad\text{and}\qquad U_A \coloneq \mathbb{P}^{1}_{\mathbb{Z}} \setminus D_A.
\end{equation*}
If $G$ is a finite quotient of $\EtFundGrp{U_A}$, write $I_a(G)\coloneq I_{D_a}(G)$ for $a\in A$.
For a prime $p$, we say that $D_A$ is \emph{normal crossings at~$p$} if the pullback of $D_A$ to $\mathbb{P}^{1}_{\mathbb{Z}_{p}}$ is a normal crossings divisor on $\mathbb{P}^{1}_{\mathbb{Z}_{p}}$.

\begin{observation}
	\label{lem:geometric-surjectivity}
	Since $U_A\to\Spec(\mathbb{Z})$ is smooth and surjective with geometrically connected geometric generic fibre, the homotopy exact sequence~\cite[Preliminaries, Lemma~2]{MR659153}, together with
	$\EtFundGrp{\Spec(\mathbb{Z})}=1$, shows that the natural homomorphism
	\begin{equation*}
		\EtFundGrp{U_{A,\FieldAlgeClosure{\mathbb{Q}}}}
		\longrightarrow
		\EtFundGrp{U_A}
	\end{equation*}
	is surjective.
	Set $A=\{a_1,\dots,a_r\}$ with $r\geq 1$.
	After choosing topological generators of the boundary inertia groups, $\EtFundGrp{U_{A,\FieldAlgeClosure{\mathbb{Q}}}}$ is generated by the images of boundary inertia elements $\gamma_1,\dots,\gamma_r$ attached to $a_1,\dots,a_r$, and these images satisfy $\gamma_1\cdots\gamma_r=1$.
\end{observation}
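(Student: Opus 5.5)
The observation has two claims: surjectivity of $\EtFundGrp{U_{A,\FieldAlgeClosure{\mathbb{Q}}}}\to\EtFundGrp{U_A}$, and the description of the source by boundary inertia generators.

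For surjectivity, I use the homotopy exact sequence of~\cite[Preliminaries, Lemma~2]{MR659153} for $U_A\to\Spec(\mathbb{Z})$.
\begin{itemize}
\item First check the hypotheses. The scheme $U_A$ is an open subscheme of $\mathbb{P}^{1}_{\mathbb{Z}}$, so it is smooth over $\Spec(\mathbb{Z})$.
\item Surjectivity onto $\Spec(\mathbb{Z})$ comes from the fibres. Over $\mathbb{F}_p$, the reduction of $D_A$ is a finite set of points, so the fibre is $\mathbb{P}^{1}_{\mathbb{F}_p}$ minus finitely many points, which is nonempty. The generic fibre is likewise nonempty.
\item The geometric generic fibre $\mathbb{P}^{1}_{\FieldAlgeClosure{\mathbb{Q}}}\setminus\{a_1,\dots,a_r\}$ is connected, since a nonempty open subset of an irreducible curve is irreducible.
\end{itemize}
The lemma then gives the exact sequence
\[\EtFundGrp{U_{A,\FieldAlgeClosure{\mathbb{Q}}}}\to\EtFundGrp{U_A}\to\EtFundGrp{\Spec(\mathbb{Z})}\to 1.\]
By Minkowski's theorem, $\EtFundGrp{\Spec(\mathbb{Z})}=1$, so the first map is surjective.

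The most delicate point is the exact form of the cited lemma. The general homotopy sequence usually needs properness, or at least geometrically connected fibres everywhere. I expect the cited version to handle smooth surjective morphisms with geometrically connected geometric generic fibre. If it does not, there is a direct fallback. Take a connected finite étale cover $V\to U_A$ that splits completely over $U_{A,\FieldAlgeClosure{\mathbb{Q}}}$. Then $V_{\mathbb{Q}}$ splits over a number field $K$. By normality, $V$ is the normalization of $U_A$ in its function field, so $V$ factors through $U_A\otimes\mathcal{O}_K$. Because $V\to U_A$ is étale and surjective onto fibres of every residue characteristic, $\mathcal{O}_K$ must be unramified over $\mathbb{Z}$. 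Minkowski's theorem then forces $K=\mathbb{Q}$ and $V=U_A$.

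For the generators, I use Riemann's existence theorem together with the comparison theorem for characteristic $0$ base change of algebraically closed fields (Grothendieck, SGA1 XIII). These identify $\EtFundGrp{U_{A,\FieldAlgeClosure{\mathbb{Q}}}}$ with the profinite completion of the topological fundamental group of the $r$-punctured sphere. That group has the presentation $\langle\gamma_1,\dots,\gamma_r\mid\gamma_1\cdots\gamma_r=1\rangle$, where the $\gamma_i$ are loops around the punctures.

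It remains to check that each $\gamma_i$ generates an inertia group at $a_i$. The inertia group at $a_i$ in $\EtFundGrp{U_{A,\FieldAlgeClosure{\mathbb{Q}}}}$ is $\widehat{\mathbb{Z}}(1)$, the Galois group of $\FieldAlgeClosure{\mathbb{Q}}((t-a_i))$. Its image is generated by the class of a small loop around the puncture. After choosing compatible topological generators and paths, the loop $\gamma_i$ therefore maps to a topological generator of $I_{D_{a_i}}$. Pushing the $\gamma_i$ forward along the surjection of the first step gives the claimed generation and the relation $\gamma_1\cdots\gamma_r=1$.
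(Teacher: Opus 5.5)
Your proposal is correct and follows the same route as the paper. You apply the Katz--Lang homotopy exact sequence to the smooth surjective morphism $U_A\to\Spec(\mathbb{Z})$ with connected geometric generic fibre, use Minkowski's theorem $\EtFundGrp{\Spec(\mathbb{Z})}=1$, and then give the standard Riemann-existence description of $\EtFundGrp{U_{A,\FieldAlgeClosure{\mathbb{Q}}}}$ by loops around the punctures, which the paper takes for granted.

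Your fallback argument is also sound, with one unstated step: to reduce surjectivity to covers that split completely over $U_{A,\FieldAlgeClosure{\mathbb{Q}}}$, you need the image of $\EtFundGrp{U_{A,\FieldAlgeClosure{\mathbb{Q}}}}$ to be normal in $\EtFundGrp{U_A}$. This holds because that image is normal in $\EtFundGrp{U_{A,\mathbb{Q}}}$, and $\EtFundGrp{U_{A,\mathbb{Q}}}$ surjects onto $\EtFundGrp{U_A}$.
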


\begin{lemma}
	\label{lem:good-prime-inertia-UA}
	Let $G$ be a finite quotient of $\EtFundGrp{U_A}$, and let $a\in A$.
	Let $p$ be a prime such that $D_A$ is normal crossings at~$p$.
	Then the inertia subgroup $I_a(G)$ is cyclic of order prime to $p$.
\end{lemma}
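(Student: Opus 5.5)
Two facts are needed: that $I_a(G)$ is cyclic, and that $p$ does not divide its order.

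For cyclicity, I will use that the residue field of $\mathcal{O}_{X,\eta_a}$ at the generic point of $D_a$ has characteristic $0$. By construction, $I_a(G)$ is the image of $\EtFundGrp{\Spec(K_a^{\mathrm{sh}})}$, the absolute Galois group of the fraction field of the strict henselization of the discrete valuation ring $\mathcal{O}_{\mathbb{P}^1_{\mathbb{Z}},\eta_a}$. Its residue field is separably closed of characteristic $0$, so every finite extension of $K_a^{\mathrm{sh}}$ is tamely ramified. Hence this Galois group is the tame inertia group, which is isomorphic to $\widehat{\mathbb{Z}}$. (Alternatively, the finite Galois extensions are Kummer extensions obtained by adjoining an $e$-th root of a uniformizer.) A quotient of a procyclic group is cyclic, so $I_a(G)$ is cyclic. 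This part needs no hypothesis at $p$.

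For the order, I will apply Corollary~\ref{cor:corrected-prop-1-3}, or rather its argument, to the tame covering obtained by normalizing $\mathbb{P}^1_{\mathbb{Z}_{(p)}}$ in the Galois cover corresponding to $G$. The hypothesis of Definition~\ref{def:tame-codim1} requires $D$ to be normal crossings, and $D_A$ is only assumed normal crossings at $p$. I will therefore base change to $X=\mathbb{P}^1_{\mathbb{Z}_p}$ (or the localization $\mathbb{P}^1_{\mathbb{Z}_{(p)}}$), where $D_A$ is normal crossings by hypothesis. Each $D_a$ meets the special fibre in a closed point $s$ of characteristic $p$. By the argument of Corollary~\ref{cor:tame-inertia-pro-complement}, the normalization $Y\to X$ of the pulled-back cover is a tame covering of $(X,D_{A,\mathbb{Z}_p})$: the ring $\mathbb{Z}_p$ is excellent, hence Nagata, and the components are horizontal. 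Corollary~\ref{cor:corrected-prop-1-3} then shows that the ramification index $e$ along the pulled-back component is invertible in $\mathcal{O}_{D,s}$, so $p\nmid e$.

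The main obstacle is relating $e$ to $\CardinalityOfSet{I_a(G)}$ after the base change. I would argue as follows.
\begin{enumerate}[(i)]
	\item The generic point of $D_a\otimes\mathbb{Z}_p$ maps to $\eta_a$.
	\item The extension of discrete valuation rings $\mathcal{O}_{\eta_a}\to\mathcal{O}_{\eta_a'}$ is unramified, since a uniformizer $a_dt-a_u$ stays a uniformizer, and its residue fields have characteristic $0$.
	\item Therefore the strict henselizations agree up to a separable residue extension, and the inertia subgroup of $G$, or of the image of $\EtFundGrp{U_{A,\mathbb{Z}_p}}$ in $G$, is unchanged.
\end{enumerate}
Consequently the ramification index of a component of the base-changed cover along $D_a'$ equals $\CardinalityOfSet{I_a(G)}$, because the inertia group is the same subgroup. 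Combined with the previous paragraph, this gives $p\nmid\CardinalityOfSet{I_a(G)}$.
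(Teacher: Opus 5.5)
Your proposal is correct. The cyclicity part is the same as the paper's; for the order you take a slightly different reduction to the normal crossings situation.

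\textbf{How the paper handles the order.} The paper does not base change. It removes from $\mathbb{P}^{1}_{\mathbb{Z}}$ the locus $Z$ of points where $D_A$ fails to be normal crossings, and works on the open subscheme $X=\mathbb{P}^{1}_{\mathbb{Z}}\setminus Z$. There $D_A|_X$ is a horizontal normal crossings divisor, $X\setminus D_A|_X=U_A$, and the generic point $\eta_a$ and its local ring are untouched. Corollary~\ref{cor:tame-inertia-pro-complement} then applies verbatim, and no comparison of inertia groups is needed.

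\textbf{What your base change buys and costs.} Base changing to $\mathbb{P}^{1}_{\mathbb{Z}_p}$ uses the hypothesis exactly as defined, namely normal crossings after pullback to $\mathbb{Z}_p$. The paper instead uses, without comment, the easy descent of this condition to the point of $D_a$ over $p$ in $\mathbb{P}^{1}_{\mathbb{Z}}$, which is needed to see that this point is not in $Z$. The price of your route is step (iii).

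\textbf{Step (iii).} The step is right, but the phrase ``the strict henselizations agree up to a separable residue extension'' is loose, since $\overline{\mathbb{Q}}\subset\overline{\mathbb{Q}}_p$ is not algebraic. The clean justification is as follows. Let $K'^{\mathrm{sh}}$ be the fraction field of the strict henselization at the generic point of $D_a\otimes\mathbb{Z}_p$. Both $\EtFundGrp{\Spec(K_a^{\mathrm{sh}})}$ and $\EtFundGrp{\Spec(K'^{\mathrm{sh}})}$ are $\widehat{\mathbb{Z}}(1)$. They are computed by Kummer theory from the same uniformizer $a_dt-a_u$ and the same roots of unity. Hence the restriction map between them is an isomorphism, and their images in $G$ agree up to conjugacy.

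\textbf{A simpler variant.} If you use your alternative $\mathbb{P}^{1}_{\mathbb{Z}_{(p)}}$ instead, the local ring at $\eta_a$ is literally that of $\mathbb{P}^{1}_{\mathbb{Z}}$, because it is a localization. Step (iii) then becomes trivial. Your argument becomes essentially the paper's, with $\mathbb{P}^{1}_{\mathbb{Z}_{(p)}}$ playing the role of $\mathbb{P}^{1}_{\mathbb{Z}}\setminus Z$. You still need the same descent of the normal crossings condition from $\mathbb{Z}_p$, e.g.\ via completions or Lemma~\ref{lem:nc-at-prime-criterion}.
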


\begin{proof}
	The strict henselization of the local ring at the generic point of $D_a$ is a discrete valuation ring of residue characteristic~$0$.
	Its punctured spectrum has procyclic fundamental group (see~\cite[Expos\'{e} XIII, Corollaire~5.3]{MR0354651}), and hence $I_a(G)$ is cyclic.

	Let $Z\subset D_A$ be the set of all points where $D_A$ is not a normal crossings divisor, and set $X\coloneq\mathbb{P}^{1}_{\mathbb{Z}}\setminus Z$.
	Then $D_A|_X$ is a horizontal normal crossings divisor on the Nagata scheme $X$, and $X\setminus (D_A|_X)=U_A$.
	Since $D_A$ is normal crossings at~$p$, the component $D_a\cap X$ contains the point of $D_a$ above~$p$.
	Removing $Z$ changes neither $U_A$ nor the generic point of $D_a$, and hence the corresponding inertia subgroup is unchanged.
	Corollary~\ref{cor:tame-inertia-pro-complement} therefore shows that $\CardinalityOfSet{I_a(G)}$ is prime to~$p$.
\end{proof}

For a finite rational point $a\in\mathbb{P}^{1}_{\mathbb{Q}}(\mathbb{Q})$, write $a=a_u/a_d$ in lowest terms with $a_d\in\mathbb{Z}_{>0}$.
For distinct $a,b\in A$, define
\begin{equation*}
	\Delta(a,b)\coloneq
	\begin{cases}
		a_u b_d-a_d b_u, & a\neq \infty,\ b\neq \infty, \\
		a_d,             & b=\infty,                    \\
		b_d,             & a=\infty
	\end{cases}.
\end{equation*}
Note that for distinct $a,b\in \mathbb{P}^{1}_{\mathbb{Q}}(\mathbb{Q})$,  the number $v_p(\Delta(a,b))$ is invariant under the action of $\Aut(\mathbb{P}^{1}_{\mathbb{Z}})=\PGL_{2}(\mathbb{Z})$ on $\mathbb{P}^{1}_{\mathbb{Z}}$ for every prime $p$.

\begin{lemma}
	\label{lem:nc-at-prime-criterion}
	Let $p$ be a prime.
	Then, for any distinct $a,b\in A$, the divisors $D_a$ and $D_b$ meet in the special fibre $\mathbb{P}^{1}_{\mathbb{F}_{p}}$ if and only if $p\mid\Delta(a,b)$.
	When they meet in this fibre, their intersection consists of a unique closed point, and the local intersection multiplicity at that point is $v_p(\Delta(a,b))$.
	In particular, $D_A$ is normal crossings at~$p$ if and only if both of the following conditions hold:
	\begin{enumerate}[(i)]
		\item no closed point of $\mathbb{P}^{1}_{\mathbb{F}_{p}}$ lies on more than two components of $D_A$;
		\item whenever two distinct components $D_a$ and $D_b$ meet in $\mathbb{P}^{1}_{\mathbb{F}_{p}}$, we have $v_p(\Delta(a,b))=1$.
	\end{enumerate}
\end{lemma}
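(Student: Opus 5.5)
Since $\Delta(a,b)$ and intersection behaviour are invariant under $\PGL_{2}(\mathbb{Z})$, I first reduce to the case where neither $a$ nor $b$ is $\infty$. Given any two distinct rational points, an element of $\PGL_{2}(\mathbb{Z})$ moves both into the affine chart: pick an integer $n$ with $t\mapsto 1/(t-n)$ avoiding $\infty$ for both, or simply translate first. Both the condition "meet in the fibre over $p$" and the local intersection multiplicity are preserved by automorphisms of $\mathbb{P}^{1}_{\mathbb{Z}}$. Alternatively, I can handle the $\infty$ case directly. On the chart $T_1\neq 0$ with coordinate $s=T_1/T_0$, $D_\infty$ is $s=0$ and $D_b$ is $b_d - b_u s=0$. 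These meet mod $p$ iff $p\mid b_d$, and the local computation then matches the generic case.

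In the affine chart with coordinate $t$, $D_a=V(a_dt-a_u)$ and $D_b=V(b_dt-b_u)$. Their intersection is $\Spec$ of $R\coloneq\mathbb{Z}[t]/(a_dt-a_u,\,b_dt-b_u)$. The ideal contains the resultant $b_d(a_dt-a_u)-a_d(b_dt-b_u)=a_db_u-a_ub_d=-\Delta(a,b)$. Since $\gcd(a_u,a_d)=1$, the image of $D_a$ in the fibre over $p$ is a single $\mathbb{F}_p$-point: either $t=a_u/a_d$, or the point at infinity if $p\mid a_d$. Hence $D_a\cap D_b\cap\mathbb{P}^1_{\mathbb{F}_p}$ has at most one point.

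To see that they meet exactly when $p\mid\Delta$, note that $D_a\cong\Spec\mathbb{Z}$ as a section. Restricting the equation of $D_b$ to $D_a$ gives the element $\Delta(a,b)$ up to a unit (after clearing $a_d$ when it is invertible near the point). So $D_a\cap D_b\cong\Spec\mathbb{Z}/\Delta(a,b)$. This gives both claims at once: the divisors meet over $p$ iff $p\mid\Delta$, and the local intersection multiplicity there is $\mathrm{length}(\mathbb{Z}_{(p)}/\Delta)=v_p(\Delta(a,b))$.

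The main point to check is the case $p\mid a_d$, where the intersection point lies at infinity. I handle it by working in the chart $s=1/t$ or by the $\PGL_2(\mathbb{Z})$ reduction above.

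For the normal crossings criterion, $\mathbb{P}^1_{\mathbb{Z}_p}$ is regular of dimension $2$, and each $D_a$ is a section, hence regular. At a closed point $x$ in the fibre over $p$, there are three cases:
\begin{enumerate}[(1)]
	\item If $x$ lies on one component, $D_A$ is regular at $x$, so normal crossings holds.
	\item If $x$ lies on exactly two components $D_a,D_b$ with local equations $f,g$, normal crossings holds iff $(f,g)$ is a regular system of parameters, i.e.\ $(f,g)=\mathfrak{m}_x$. This holds iff the intersection multiplicity is $1$, i.e.\ $v_p(\Delta(a,b))=1$. Étale localization does not help here: the components through $x$ are already locally irreducible with distinct branches, so normal crossings at $x$ is equivalent to strict normal crossings there.
	\item If $x$ lies on three or more components, normal crossings fails. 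Their local equations would all have to be distinct parameters from a regular system of parameters of length $2$, which is impossible; this count is again unaffected by étale base change.
\end{enumerate}
Points in the generic fibre are fine because the $a_i$ are distinct. This yields conditions (i) and (ii).
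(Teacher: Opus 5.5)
Your proof is correct. It organizes the intersection computation differently from the paper.

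\textbf{Intersection computation.} The paper computes the local length only in the normalized case $b=\infty$. There, on the chart $T_0\neq 0$, one has $\mathbb{Z}[w]_{(p,w)}/(w,a_d-a_uw)\cong\mathbb{Z}_{(p)}/(a_d)$. The general case is then reduced to this one by an explicit matrix in $\PGL_{2}(\mathbb{Z})$ sending $D_b$ to $D_\infty$, together with the invariance of $v_p(\Delta)$.

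You instead restrict the equation of $D_b$ to the section $D_a\cong\Spec\mathbb{Z}$ for arbitrary $a,b$. This gives the global statement $D_a\cap D_b\cong\Spec\mathbb{Z}/(\Delta(a,b))$ directly, and it does not use the invariance of $v_p(\Delta)$. The paper's computation is the special case of yours in which one section is $D_\infty$.

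You can also drop charts altogether. Pull back the linear form $b_dT_0-b_uT_1$ along the section $(T_0:T_1)=(a_u:a_d)$, with $\mathcal{O}(1)$ trivialized by the primitive vector $(a_u,a_d)$. This gives exactly $\pm\Delta(a,b)$, so the case $p\mid a_d$ needs no separate treatment.

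\textbf{Normal crossings criterion.} The paper states the equivalence in one sentence. Your case analysis supplies details it omits: the points of the generic fibre, and why étale localization cannot turn a tangency or a triple point into normal crossings. The precise reason for the latter is as follows. Each local equation stays prime in the factorial étale local ring, because the quotient remains a DVR. Unique factorization then forces any regular system of parameters cutting out the divisor to consist of the local equations up to units. Faithful flatness then descends $(f,g)=\mathfrak{m}_x$.

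\textbf{Small slips.}
\begin{itemize}
\item The coordinate $s=T_1/T_0$ lives on the chart $T_0\neq 0$, not $T_1\neq 0$.
\item A translation fixes $\infty$, so ``simply translate first'' does not remove $\infty$.
\item Moving $a,b$ into the affine chart does not address the real issue, which is that the intersection point over $p$ may lie at infinity. You correctly handle that with the second chart.
\end{itemize}
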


\begin{proof}
	We first consider the case $b=\infty$ and $a\neq\infty$.
	Write $a=a_u/a_d$.
	On the affine chart $T_0\neq 0$ with coordinate $w=T_1/T_0$, the divisor $D_\infty$ is given by $w=0$, while $D_a$ is given by $a_d-a_u w=0$.
	If $p\nmid a_d$, then $D_a$ and $D_\infty$ do not meet in the special fibre over $p$.
	Assume that $p\mid a_d$, and let $x$ be the unique closed point of $D_a\cap D_\infty\cap \mathbb{P}^{1}_{\mathbb{F}_{p}}$ corresponding to the maximal ideal $(p,w)$.
	We have
	\begin{equation*}
		\begin{aligned}
			i_x(D_a,D_\infty;\mathbb{P}^{1}_{\mathbb{Z}})
			 & =
			\mathrm{length}_{\mathbb{Z}[w]_{(p,w)}}\left(\frac{\mathbb{Z}[w]_{(p,w)}}{(w,a_d-a_u w)}\right) \\
			 & =
			\mathrm{length}_{\mathbb{Z}_{(p)}}\left(\frac{\mathbb{Z}_{(p)}}{(a_d)}\right)
			=
			v_p(a_d)
			=
			v_p(\Delta(a,\infty)).
		\end{aligned}
	\end{equation*}
	Hence the assertion follows in this case.

	Next, we consider the case $b\neq\infty$.
	Choose integers $r,s$ such that $r b_u+s b_d=1$.
	The automorphism
	\begin{equation*}
		M\colon(T_0:T_1)\longmapsto
		(rT_0+sT_1:b_dT_0-b_uT_1)
	\end{equation*}
	is defined by a matrix of determinant $-1$ and sends $D_b$ to $D_\infty$.
	Since $M$ is defined over $\mathbb{Z}$, it preserves the special fibres and local intersection multiplicities.
	Moreover, $Ma\neq\infty$ because $a\neq b$.
	The assertion now follows from the first case and the equality $v_p(\Delta(Ma,Mb))=v_p(\Delta(a,b))$.
	This completes the proof of the first assertion.

	After base change to $\mathbb{Z}_p$, each $D_a$ is a regular section of the regular surface $\mathbb{P}^{1}_{\mathbb{Z}_p}$.
	Hence the pullback of $D_A$ is normal crossings if and only if at most two components pass through each closed point and any two components that meet have intersection multiplicity~$1$.
	The final assertion therefore follows from the first assertion.
\end{proof}

\begin{lemma}
	\label{lem:triple-collision-gives-tangent-pair}
	Assume that three distinct irreducible components of $D_A$ pass through the same closed point of $\mathbb{P}^{1}_{\mathbb{F}_{2}}$.
	Then there exist distinct $a,b\in A$ such that $v_2(\Delta(a,b))\geq 2$.
\end{lemma}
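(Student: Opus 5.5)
Our plan is to reduce to a normalized configuration using the $\PGL_{2}(\mathbb{Z})$-invariance of $v_2(\Delta(\cdot,\cdot))$, and then use the fact that $\mathbb{Z}/4\mathbb{Z}$ has only two residues mod $2$ lying over a given residue mod~$2$. Concretely, let $D_a,D_b,D_c$ be three distinct components through a common closed point $x\in\mathbb{P}^{1}_{\mathbb{F}_{2}}$. Since $\mathbb{P}^{1}_{\mathbb{F}_2}$ has only the three rational points $0,1,\infty$ and $\PGL_{2}(\mathbb{Z})$ acts transitively on them (it surjects onto $\PGL_2(\mathbb{F}_2)\cong S_3$), we may apply an automorphism $M\in\PGL_{2}(\mathbb{Z})$ as in the proof of Lemma~\ref{lem:nc-at-prime-criterion}. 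Because $v_2(\Delta)$ is invariant, we can assume $x$ is the point $0$ of the special fibre. Then none of $a,b,c$ is $\infty$, since $D_\infty$ meets the fibre at $\infty$. Thus $a,b,c$ are finite rationals whose reductions are $0$, which means $a_d,b_d,c_d$ are odd and $a_u,b_u,c_u$ are even.

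For such points we have $\Delta(a,b)=a_ub_d-a_db_u$, and we want to find two of them with $4\mid\Delta$. Work in $\mathbb{Z}_{(2)}$: since the denominators are units, $v_2(\Delta(a,b))=v_2(a-b)$, where $a,b$ are now viewed as elements of $2\mathbb{Z}_{(2)}$. The three elements $a,b,c$ lie in $2\mathbb{Z}_{(2)}$, and $2\mathbb{Z}_{(2)}/4\mathbb{Z}_{(2)}$ has exactly two classes. By pigeonhole, two of them, say $a$ and $b$, are congruent modulo $4$. Then $v_2(a-b)\geq 2$, and since $a\neq b$ this valuation is finite. Hence $v_2(\Delta(a,b))\geq 2$.

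The only step needing care is the reduction: we must check that a closed point of $\mathbb{P}^1_{\mathbb{F}_2}$ lying on a horizontal section $D_a$ is necessarily $\mathbb{F}_2$-rational, which holds because $D_a\cap\mathbb{P}^1_{\mathbb{F}_2}$ is the reduction of a $\mathbb{Z}$-point. We must also check that the equality $v_2(\Delta(Ma,Mb))=v_2(\Delta(a,b))$ holds, which the paper asserts and which the proof of Lemma~\ref{lem:nc-at-prime-criterion} effectively uses. Alternatively, one could avoid the reduction altogether by a direct case analysis over the three points $0,1,\infty$, treating the point $\infty$ via the coordinate $w=1/t$. The normalization is cleaner, though, so we expect the computation itself to be routine.
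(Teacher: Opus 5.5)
Your proposal is correct and is essentially the paper's proof. You move the common point to $t=0$ by an automorphism of $\mathbb{P}^{1}_{\mathbb{Z}}$, observe that the three points then have even numerators and odd denominators, and apply pigeonhole to their classes in $2\mathbb{Z}_{(2)}/4\mathbb{Z}_{(2)}$; the paper says this as ``classes $0$ and $2$ modulo $4$'', and your extra checks (the specialization point is $\mathbb{F}_{2}$-rational, and $\PGL_{2}(\mathbb{Z})$ acts transitively on $\mathbb{P}^{1}(\mathbb{F}_{2})$) only make explicit what the paper leaves implicit.
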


\begin{proof}
	After applying an automorphism of $\mathbb{P}^{1}_{\mathbb{Z}}$, we may assume that the common point is the point $t=0$ on the affine chart $T_1\neq 0$.
	Every component passing through this point is of the form $D_{u_i/v_i}$ with $v_i$ odd and $u_i$ even.
	The classes $u_i/v_i$ modulo $4$ can take only the values $0$ and $2$.
	Hence, among three such components, two have the same class modulo $4$.
	For these two components, we have $u_i v_j-u_j v_i\equiv 0 \pmod{4}$, and therefore $v_2(\Delta(u_i/v_i,u_j/v_j))\geq 2$.
\end{proof}

\begin{proposition}
	\label{prop:only-if}
	Assume that $D_A$ is not normal crossings at the prime~$2$.
	Then there exists a continuous surjective homomorphism
	\begin{equation*}
		\EtFundGrp{U_A}\twoheadrightarrow
		\mathbb{Z}/2\mathbb{Z}.
	\end{equation*}
\end{proposition}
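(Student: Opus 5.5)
By Lemma~\ref{lem:nc-at-prime-criterion} and Lemma~\ref{lem:triple-collision-gives-tangent-pair}, the failure of normal crossings at~$2$ yields distinct $a,b\in A$ with $v_2(\Delta(a,b))\geq 2$. The plan is to build an explicit double cover of $U_A$ branched only along $D_a+D_b$. Since $\PGL_2(\mathbb{Z})$ preserves $v_2(\Delta)$ and moves $D_b$ to $D_\infty$, we may take $b=\infty$ and $a=a_u/a_d$ with $4\mid a_d$. Applying the unipotent substitution $t\mapsto t+k$ and the sign change $t\mapsto -t$, we can further arrange $a_u\equiv 1\pmod 4$; note that $a_u$ is odd because $a_d$ is even and the fraction is in lowest terms.

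For the cover, consider the function $f=a_d t-a_u$ on $\mathbb{P}^1_{\mathbb{Z}}$. It has divisor $D_a-D_\infty$, so its zeros and poles lie on $D_a$ and $D_\infty$ only. I would set $g\coloneq -f=a_u-a_d t$, which is a unit on $U_A$ whose reduction modulo $4$ is $\equiv 1$. The candidate cover is the normalization $Y$ of $\mathbb{P}^1_{\mathbb{Z}}$ in $K(\sqrt{g})$, restricted over $U_A$.

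Étaleness is clear away from the prime $2$. Over $2$, write $\sqrt{g}=1+2y$; then $y^2+y=(g-1)/4=(a_u-1)/4-(a_d/4)t$. Since $4\mid a_d$ and $4\mid a_u-1$, this is an Artin--Schreier-type equation with integral right-hand side. Its discriminant is $1$, so it is étale over the whole affine chart. The chart at infinity needs a similar check, but $D_\infty$ is removed, so only $U_A$ matters.

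The cover is nontrivial because $g$ is not a square in $\mathbb{Q}(t)$, having a simple zero along $D_a$. It is therefore connected, and it yields the surjection $\EtFundGrp{U_A}\twoheadrightarrow\mathbb{Z}/2\mathbb{Z}$.

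The main obstacle is the normalization step over the prime $2$. One must show that the ring $\mathcal{O}[y]/(y^2+y-h)$ is already the normalization over $U_A$ and is finite étale there. That requires checking $h\in\mathcal{O}(U_A\cap\text{chart})$ and handling points of the fibre at $2$ lying in the other affine chart. At such points $t$ is not regular, but there $D_\infty$ has been removed, and since $g$ is a unit on $U_A$ the same formula works locally after noting $t$ is regular off $D_\infty$. I would also confirm that the normalizing reductions ($4\mid a_d$, $a_u\equiv1$ mod $4$) are compatible, which needs only $a_u$ odd.
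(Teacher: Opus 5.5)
Your construction is the paper's. After moving $b$ to $\infty$, the paper takes the monic polynomial $x^2+x+\frac{1-\varepsilon(vt-u)}{4}$ with $\varepsilon\in\{\pm1\}$ chosen so that $\varepsilon u\equiv -1\pmod{4}$. This is exactly your $y^2+y-h$ with $h=(g-1)/4$: your sign change $t\mapsto -t$ plays the role of $\varepsilon$. The translation $t\mapsto t+k$ is superfluous, since it changes $a_u$ by a multiple of $a_d\equiv 0\pmod 4$.

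One justification in your proposal is wrong, though. The discriminant of $y^2+y-h$ is $1+4h=g$, not $1$. Consequently the cover is \emph{not} \'etale over the whole chart $\Spec(\mathbb{Z}[t])$. Indeed, $g$ vanishes to order one along $D_a$, so the cover ramifies there, as it must, because $\mathbb{A}^{1}_{\mathbb{Z}}$ has trivial \'etale fundamental group. The correct statement, which is how the paper argues, is that $g$ is a unit on $U_{\{a,\infty\}}=\Spec(\mathbb{Z}[t,1/g])$. Hence $\mathbb{Z}[t,1/g][y]/(y^2+y-h)$ is free of rank~$2$ with unit discriminant, and so it is finite \'etale. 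This handles the prime~$2$ and the other primes at once: at points over~$2$, $g$ is a unit because $g\equiv 1\pmod{2}$. No separate treatment of $U_A[1/2]$ is needed.

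The same observation removes what you call the main obstacle. Since $U_A\subset\mathbb{P}^{1}_{\mathbb{Z}}\setminus D_\infty=\Spec(\mathbb{Z}[t])$, no point of $U_A$ lies outside the affine chart. Since a finite \'etale algebra over a normal ring is normal, $\mathbb{Z}[t,1/g][y]/(y^2+y-h)$ is automatically the normalization, so nothing has to be computed. Connectedness follows as you say: the algebra is free over $\mathbb{Z}[t,1/g]$, hence embeds into the field $\mathbb{Q}(t)(\sqrt{g})$. It stays integral after restriction to the dense open $U_A$, which gives the surjection onto $\mathbb{Z}/2\mathbb{Z}$.
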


\begin{proof}
	By Lemmas~\ref{lem:nc-at-prime-criterion} and~\ref{lem:triple-collision-gives-tangent-pair}, there exist distinct $a,b\in A$ such that $v_2(\Delta(a,b))\geq 2$.
	After replacing $A$ by its image under an automorphism of $\mathbb{P}^{1}_{\mathbb{Z}}$, we may assume that $b=\infty$.
	Write $a=u/v$ in lowest terms with $v\in\mathbb{Z}_{>0}$.
	It follows that $4\mid v$ and that $u$ is odd.
	We have
	\begin{equation*}
		U_{\{a,\infty\}}
		=\Spec\left(\mathbb{Z}\left[t,\frac{1}{vt-u}\right]\right).
	\end{equation*}
	Since $U_A$ is a nonempty open subscheme of the regular integral scheme $U_{\{a,\infty\}}$, the natural homomorphism $\EtFundGrp{U_A}\twoheadrightarrow\EtFundGrp{U_{\{a,\infty\}}}$ is surjective.
	Choose $\varepsilon\in\{\pm1\}$ such that $\varepsilon u\equiv-1\pmod{4}$.
	Since $4\mid v$, we have
	\begin{equation*}
		f(x)\coloneq
		x^2+x+\frac{1-\varepsilon (vt-u)}{4}
		\in\mathbb{Z}[t][x].
	\end{equation*}
	The discriminant of $f$ is $\varepsilon (vt-u)$, which is a unit on $U_{\{a,\infty\}}$.
	Hence $f$ defines a finite \'{e}tale cover $Y\to U_{\{a,\infty\}}$ of degree~$2$.
	The element $\varepsilon (vt-u)$ is not a square in $\mathbb{Q}(t)$, since it has a simple zero.
	Thus $f$ is irreducible over $\mathbb{Q}(t)$.
	Since $f$ is monic, the coordinate ring of $Y$ embeds into $\mathbb{Q}(t)[x]/(f)$, and hence $Y$ is integral.
	Therefore $Y\to U_{\{a,\infty\}}$ is a connected finite \'{e}tale cover of degree~$2$.
	This proves the assertion.
\end{proof}

\begin{theorem}
	\label{thm:solvable-quotient}
	If $D_A$ is normal crossings at the prime~$2$, then $\EtFundGrp{U_A}$ has no nontrivial finite solvable quotient.
	In particular,
	\begin{equation*}
		\EtFundGrp{U_A}^{\mathrm{solv}}=1,
	\end{equation*}
	if and only if $D_A$ is normal crossings at the prime~$2$.
\end{theorem}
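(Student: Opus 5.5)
The plan is to reduce to cyclic quotients of prime order and then treat the primes $\ell=2$ and $\ell$ odd by two different arguments.

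\textbf{Reduction.} A nontrivial finite solvable group has a nontrivial abelian quotient, hence a quotient $\mathbb{Z}/\ell\mathbb{Z}$ for some prime $\ell$. So it suffices to show that, when $D_A$ is normal crossings at~$2$, there is no continuous surjection $\EtFundGrp{U_A}\twoheadrightarrow G\coloneq\mathbb{Z}/\ell\mathbb{Z}$. Once this is proved, the ``in particular'' equivalence follows by combining it with Proposition~\ref{prop:only-if}: if $D_A$ is not normal crossings at~$2$, then $\mathbb{Z}/2\mathbb{Z}$ is a nontrivial solvable quotient.

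\textbf{The case $\ell=2$.} By Observation~\ref{lem:geometric-surjectivity}, $G$ is generated by the images of the boundary inertia elements $\gamma_1,\dots,\gamma_r$. Each image lies in $I_{a_i}(G)$. Since $D_A$ is normal crossings at~$2$, Lemma~\ref{lem:good-prime-inertia-UA} shows that $I_{a_i}(G)$ has odd order. Inside $\mathbb{Z}/2\mathbb{Z}$ this forces $I_{a_i}(G)=1$ for every $i$, so $G$ is trivial, a contradiction.

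\textbf{The case $\ell$ odd.} Here the normal crossings hypothesis is not needed; instead I use the Galois action on the geometric inertia.
\begin{enumerate}[(1)]
	\item Let $\Pi\coloneq\EtFundGrp{U_{A,\FieldAlgeClosure{\mathbb{Q}}}}$. The composite $\Pi\to\EtFundGrp{U_{A,\mathbb{Q}}}\to\EtFundGrp{U_A}\to G$ is surjective by Observation~\ref{lem:geometric-surjectivity}, and it factors through $\Pi^{\abelian}/\ell$.
	\item For $\sigma\in\AbsGalGrp{\mathbb{Q}}$, choose a lift $\tilde\sigma\in\EtFundGrp{U_{A,\mathbb{Q}}}$. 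Conjugation by $\tilde\sigma$ induces the outer Galois action on $\Pi$. After mapping to the abelian group $G$, this conjugation becomes trivial. Hence $\Pi^{\abelian}/\ell\to G$ factors through the $\AbsGalGrp{\mathbb{Q}}$-coinvariants of $\Pi^{\abelian}/\ell$.
	\item Since the points $a_i$ are $\mathbb{Q}$-rational, the standard description of the tame fundamental group of $\mathbb{P}^1$ minus rational points gives
	\begin{equation*}
		\Pi^{\abelian}/\ell\cong\Bigl(\bigoplus_{i=1}^{r}(\mathbb{Z}/\ell\mathbb{Z})(1)\,\gamma_i\Bigr)\Big/\bigl(\textstyle\sum_i\gamma_i\bigr).
	\end{equation*}
	So $\AbsGalGrp{\mathbb{Q}}$ acts on it through the mod-$\ell$ cyclotomic character $\chi_\ell$.
	\item For $\ell$ odd, $\chi_\ell$ is nontrivial. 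Pick $\sigma$ with $\chi_\ell(\sigma)\neq1$; then $\chi_\ell(\sigma)-1$ is a unit in $\mathbb{F}_\ell$. Multiplication by $\chi_\ell(\sigma)-1$ is therefore surjective, so the coinvariants vanish and $G=0$, a contradiction.
\end{enumerate}

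\textbf{Main obstacle.} The step needing the most care is (2)--(3). One must justify that the Galois action on the inertia generators in $\Pi^{\abelian}$ is exactly the cyclotomic character, with $\gamma_i\mapsto\gamma_i^{\chi(\sigma)}$ up to conjugacy. This holds because each $D_{a_i}$ has a rational generic point, so its inertia group is canonically $\hat{\mathbb{Z}}(1)$ as a $\AbsGalGrp{\mathbb{Q}}$-module. One must also check that passing to the abelian quotient $G$ kills the ambiguity from the choice of lift $\tilde\sigma$, which it does because inner automorphisms act trivially on $G$.
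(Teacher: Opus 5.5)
Your proof is correct and follows essentially the same route as the paper: the $\AbsGalGrp{\mathbb{Q}}$-coinvariants of $\Pi^{\abelian}\cong\widehat{\mathbb{Z}}(1)^{r-1}$ kill the odd part of any abelian quotient, and the $2$-part dies because the inertia groups have odd order by Lemma~\ref{lem:good-prime-inertia-UA} and generate the quotient. The paper uses complex conjugation, acting as $-1$, which is one valid choice of your $\sigma$ for every odd $\ell$; the only difference is organizational, since you split into $\mathbb{Z}/\ell\mathbb{Z}$ cases while the paper first shows every abelian quotient is an elementary abelian $2$-group and then applies the inertia argument.
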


\begin{proof}
	Assume that $D_A$ is normal crossings at the prime~$2$.
	If $\CardinalityOfSet{A}\leq 1$, then $U_{A,\FieldAlgeClosure{\mathbb{Q}}}$ is isomorphic to either $\mathbb{P}^{1}_{\FieldAlgeClosure{\mathbb{Q}}}$ or $\mathbb{A}^{1}_{\FieldAlgeClosure{\mathbb{Q}}}$.
	Hence its \'{e}tale fundamental group is trivial, and Observation~\ref{lem:geometric-surjectivity} gives $\EtFundGrp{U_A}=1$.
	Therefore, we may assume that $\CardinalityOfSet{A}\geq 2$.

	Since every nontrivial finite solvable group has a nontrivial finite abelian quotient, it is enough to show that $\EtFundGrp{U_A}$ has no nontrivial finite abelian quotient.
	Let $G$ be a finite abelian quotient of $\EtFundGrp{U_A}$.
	By Observation~\ref{lem:geometric-surjectivity}, the composite of the natural homomorphism $\EtFundGrp{U_{A,\FieldAlgeClosure{\mathbb{Q}}}}\to\EtFundGrp{U_A}$ with the quotient map to $G$ is surjective.
	This composite extends to $\EtFundGrp{U_{A,\mathbb{Q}}}$.
	Since $G$ is abelian, the restriction of this extension to $\EtFundGrp{U_{A,\FieldAlgeClosure{\mathbb{Q}}}}$ is invariant under conjugation by $\EtFundGrp{U_{A,\mathbb{Q}}}$.
	It therefore factors through the $\AbsGalGrp{\mathbb{Q}}$-coinvariants of $\Abelianization{\EtFundGrp{U_{A,\FieldAlgeClosure{\mathbb{Q}}}}}$.
	Thus, there is a surjection
	\begin{equation*}
		\left(
		\Abelianization{\EtFundGrp{U_{A,\FieldAlgeClosure{\mathbb{Q}}}}}
		\right)_{\AbsGalGrp{\mathbb{Q}}}
		\twoheadrightarrow G.
	\end{equation*}
	Since all points of $A$ are $\mathbb{Q}$-rational, we have $\Abelianization{\EtFundGrp{U_{A,\FieldAlgeClosure{\mathbb{Q}}}}}\cong\widehat{\mathbb{Z}}(1)^{\CardinalityOfSet{A}-1}$ as an $\AbsGalGrp{\mathbb{Q}}$-module (see~\cite[The exact sequence (2.5)]{MR1040998}).
	Complex conjugation acts on this module as multiplication by~$-1$.
	Consequently, its coinvariants, and hence $G$, are annihilated by~$2$.
	Thus $G$ is an elementary abelian $2$-group.
	By Lemma~\ref{lem:good-prime-inertia-UA}, the order of $I_a(G)$ is prime to~$2$ for every $a\in A$.
	It follows that $I_a(G)=1$ for every $a\in A$.
	These subgroups generate $G$ by Observation~\ref{lem:geometric-surjectivity}; therefore, $G=1$.
	This proves the forward implication.

	Conversely, if $D_A$ is not normal crossings at the prime~$2$, then Proposition~\ref{prop:only-if} gives a surjective homomorphism $\EtFundGrp{U_A}\twoheadrightarrow\mathbb{Z}/2\mathbb{Z}$.
	Hence $\EtFundGrp{U_A}^{\mathrm{solv}}\neq 1$.
\end{proof}
\begin{corollary}
	\label{cor:easy-range}
	Conjecture~\ref{Iconjecture_NCat2} holds for $D_A$ if $\CardinalityOfSet{A}\notin\{3,4,5,6\}$.
\end{corollary}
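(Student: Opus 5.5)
Assume $D_A$ is normal crossings at the prime~$2$, and split according to $\CardinalityOfSet{A}$. The values $\CardinalityOfSet{A}\leq 2$ are handled by Theorem~\ref{thm:solvable-quotient}; the values $\CardinalityOfSet{A}\geq 7$ are shown to be impossible.

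\emph{Case $\CardinalityOfSet{A}\leq 2$.} If $\CardinalityOfSet{A}\leq 1$, the proof of Theorem~\ref{thm:solvable-quotient} already gives $\EtFundGrp{U_A}=1$. If $\CardinalityOfSet{A}=2$, Observation~\ref{lem:geometric-surjectivity} shows that $\EtFundGrp{U_A}$ is topologically generated by the images of $\gamma_1,\gamma_2$ with $\gamma_1\gamma_2=1$. Hence it is generated by a single element, so every finite quotient is cyclic and in particular solvable. Theorem~\ref{thm:solvable-quotient} then forces every finite quotient to be trivial. Since $\EtFundGrp{U_A}$ is profinite, it follows that $\EtFundGrp{U_A}=1$.

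\emph{Case $\CardinalityOfSet{A}\geq 7$.} Here I show that $D_A$ cannot be normal crossings at~$2$, so the hypothesis of Conjecture~\ref{Iconjecture_NCat2} fails and the conjecture holds vacuously. Each $D_a$ meets $\mathbb{P}^{1}_{\mathbb{F}_2}$ in an $\mathbb{F}_2$-rational point, and there are exactly three such points. By Lemma~\ref{lem:nc-at-prime-criterion}(i), at most two components pass through each point, which already gives $\CardinalityOfSet{A}\leq 6$. Lemma~\ref{lem:triple-collision-gives-tangent-pair} is consistent with this, since three components through one point would produce a pair with $v_2(\Delta)\geq 2$, violating Lemma~\ref{lem:nc-at-prime-criterion}(ii).

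The only step requiring care is the pigeonhole count. One must confirm that the reduction of a $\mathbb{Q}$-rational point is $\mathbb{F}_2$-rational, which is clear, and that $\CardinalityOfSet{\mathbb{P}^{1}(\mathbb{F}_2)}=3$. I do not expect any genuine obstacle.
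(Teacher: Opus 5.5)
Your proposal is correct and matches the paper's proof. The case $\CardinalityOfSet{A}=2$ goes through (pro)cyclicity coming from Observation~\ref{lem:geometric-surjectivity} (the paper phrases this as the geometric fundamental group being $\widehat{\mathbb{Z}}$) plus Theorem~\ref{thm:solvable-quotient}, and the case $\CardinalityOfSet{A}\geq 7$ is excluded by the same pigeonhole count over the three points of $\mathbb{P}^{1}(\mathbb{F}_{2})$ using Lemma~\ref{lem:nc-at-prime-criterion}(i); your aside about Lemma~\ref{lem:triple-collision-gives-tangent-pair} is unnecessary but harmless.
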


\begin{proof}
	By Observation~\ref{lem:geometric-surjectivity}, there is a surjection $\EtFundGrp{U_{A,\FieldAlgeClosure{\mathbb{Q}}}}\twoheadrightarrow \EtFundGrp{U_A}$.
	The statement is clear when $\CardinalityOfSet{A}<2$.
	When $\CardinalityOfSet{A}=2$, the group $\EtFundGrp{U_{A,\FieldAlgeClosure{\mathbb{Q}}}}$ is isomorphic to $\widehat{\mathbb{Z}}$, and hence $\EtFundGrp{U_A}$ is abelian.
	Since every nontrivial profinite group has a nontrivial finite quotient, Theorem~\ref{thm:solvable-quotient} shows that $\EtFundGrp{U_A}$ is trivial when $D_A$ is normal crossings at the prime~$2$.

	Next, assume that $\CardinalityOfSet{A}\geq 7$.
	Every rational section specializes to one of the three $\mathbb{F}_{2}$-rational points of $\mathbb{P}^{1}_{\mathbb{F}_{2}}$.
	If $D_A$ were normal crossings at the prime~$2$, at most two components could pass through each of these points, which would imply $\CardinalityOfSet{A}\leq 6$.
	Therefore $D_A$ is not normal crossings at the prime~$2$.
	Thus, Conjecture~\ref{Iconjecture_NCat2} also holds in this range.
\end{proof}

\begin{example}
	\label{ex:p-square}
	Let $p$ be an odd prime, and put $A=\{0,p^{2},\infty\}$.
	By Lemma~\ref{lem:nc-at-prime-criterion}, the divisor $D_A$ is not normal crossings at~$p$ but is normal crossings at every other prime.
	In this case, $U_A$ has trivial \'etale fundamental group.

	Indeed, we first show that the inertia group attached to $D_\infty$ is trivial.
	We have $U_A=U_{\{0,p^2\}}\setminus D_\infty$, and $D_\infty\cong\Spec(\mathbb{Z})$ is a normal crossings divisor on $U_{\{0,p^2\}}$.
	Since $D_\infty$ has a closed point of every residue characteristic, Corollary~\ref{cor:tame-inertia-pro-complement} shows that its inertia group in $\EtFundGrp{U_A}$ is trivial.

	Let $G$ be a finite quotient of $\EtFundGrp{U_A}$.
	By Observation~\ref{lem:geometric-surjectivity}, the group $G$ is generated by the images of boundary inertia elements $\gamma_0,\gamma_{p^2},\gamma_\infty$ satisfying $\gamma_0\gamma_{p^2}\gamma_\infty=1$.
	Since $\gamma_\infty=1$, the group $G$ is cyclic.
	As $D_A$ is normal crossings at~$2$, Theorem~\ref{thm:solvable-quotient} gives $G=1$.
	Hence $\EtFundGrp{U_A}=1$.
\end{example}

\section{Finite non-abelian simple quotients}\label{sec:simple-quotients}

In this section, we study finite non-solvable quotients of $\EtFundGrp{U_A}$.
We keep the notation of Section~\ref{sec:triviality}.
Lemma~\ref{lem:good-prime-inertia-UA} immediately implies the following theorem, which is the basis for the results of this section:

\begin{theorem}
	\label{thm:general-nonsolvable-obstruction}
	Let $G$ be a nontrivial finite group, and write $o(G)$ for the set of prime divisors of $\CardinalityOfSet{G}$.
	If $D_A$ is normal crossings at every prime in $o(G)$, then there is no surjective homomorphism
	\begin{equation*}
		\EtFundGrp{U_A}\twoheadrightarrow G.
	\end{equation*}
\end{theorem}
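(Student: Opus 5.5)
The plan is to argue by contradiction, combining the generation statement of Observation~\ref{lem:geometric-surjectivity} with the inertia bound of Lemma~\ref{lem:good-prime-inertia-UA}.

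Suppose there is a surjection $\EtFundGrp{U_A}\twoheadrightarrow G$ with $G\neq 1$. By Observation~\ref{lem:geometric-surjectivity}, the composite $\EtFundGrp{U_{A,\FieldAlgeClosure{\mathbb{Q}}}}\to\EtFundGrp{U_A}\twoheadrightarrow G$ is surjective. Moreover, $\EtFundGrp{U_{A,\FieldAlgeClosure{\mathbb{Q}}}}$ is topologically generated by the boundary inertia elements $\gamma_a$ for $a\in A$. Hence $G$ is generated by their images, and each image lies in a conjugate of $I_a(G)$. If $A=\emptyset$, then $\EtFundGrp{U_A}=1$ by the same observation, since $\mathbb{P}^1_{\FieldAlgeClosure{\mathbb{Q}}}$ is simply connected; this contradicts $G\neq 1$. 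So we may assume $A\neq\emptyset$.

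Fix $a\in A$. The group $I_a(G)$ is a subgroup of $G$, so every prime dividing $\CardinalityOfSet{I_a(G)}$ lies in $o(G)$. On the other hand, for each $p\in o(G)$ the divisor $D_A$ is normal crossings at~$p$ by hypothesis. Lemma~\ref{lem:good-prime-inertia-UA} then says that $\CardinalityOfSet{I_a(G)}$ is prime to~$p$. Thus $\CardinalityOfSet{I_a(G)}$ has no prime divisors at all, so $I_a(G)=1$. The same holds for every conjugate of $I_a(G)$.

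It follows that every $\gamma_a$ maps to the identity in $G$, and since these images generate $G$, we get $G=1$. This is a contradiction.

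The only step needing care is checking that the images of the $\gamma_a$ really lie in (conjugates of) the inertia subgroups $I_a(G)$ as defined in Section~\ref{sec:abhyankar}. This holds because the geometric boundary inertia over $\FieldAlgeClosure{\mathbb{Q}}$ at $a$ factors through the strict henselization at the generic point of $D_a$, up to the choice of paths. That choice only changes the subgroup by conjugation, which does not affect its order.
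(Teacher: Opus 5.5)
Your proposal is correct and follows the paper's proof: every prime dividing $\CardinalityOfSet{I_a(G)}$ lies in $o(G)$, Lemma~\ref{lem:good-prime-inertia-UA} excludes each such prime, so every $I_a(G)$ is trivial, and Observation~\ref{lem:geometric-surjectivity} then forces $G=1$. Your separate treatment of $A=\varnothing$ and your remark on conjugation just spell out details that the paper's short proof leaves implicit.
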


\begin{proof}
	Assume that there exists a surjective homomorphism $\EtFundGrp{U_A}\twoheadrightarrow G$.
	For each $a\in A$, every prime divisor of $\CardinalityOfSet{I_a(G)}$ also lies in $o(G)$.
	On the other hand, Lemma~\ref{lem:good-prime-inertia-UA} shows that every prime divisor of $\CardinalityOfSet{I_a(G)}$ lies in the set of primes at which $D_A$ is not normal crossings.
	Therefore, we have $I_a(G)=1$ for every $a$.
	Observation~\ref{lem:geometric-surjectivity} implies that $G=1$, a contradiction.
\end{proof}

If $D_A$ is normal crossings at the prime~$2$ and $\EtFundGrp{U_A}$ is nontrivial, then a nontrivial finite quotient $G$ of minimal order is non-abelian simple by Theorem~\ref{thm:solvable-quotient}.
By the Feit--Thompson odd-order theorem, such a group has even order, and hence $2\in o(G)$.
Theorem~\ref{thm:general-nonsolvable-obstruction} would exclude $G$ if $D_A$ were normal crossings at every prime in $o(G)$, whereas in Conjecture~\ref{Iconjecture_NCat2} we assume this only at the prime~$2$.
Thus, the theorem cannot be applied directly.
We instead consider certain non-abelian finite simple groups $\PSL_{2}(q)$ with $q\geq 4$.
The key input is the $\mathbb{Q}$-rationality of boundary inertia generators.

\begin{definition}[{\cite[Definition~7.1.1]{MR2363329}}]
	\label{Qrationality}
	Let $G$ be a finite group and let $g\in G$ be an element of order $n$.
	We say that $g$ is $\mathbb{Q}$-rational if $g$ is conjugate to $g^{a}$ for every integer $a$ that is coprime to $n$.
\end{definition}

\begin{proposition}
	\label{prop:boundary-generators-rational}
	Let $G$ be a finite quotient of $\EtFundGrp{U_A}$ and let $g$ be a generator of $I_a(G)$ for some $a\in A$.
	Then $g$ is $\mathbb{Q}$-rational.
\end{proposition}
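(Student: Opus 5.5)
The argument is the standard branch-cycle argument: Galois acts on inertia generators through the cyclotomic character, and because each boundary point is $\mathbb{Q}$-rational, that action conjugates the inertia at $a$ to the inertia at $a$ itself.

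\textbf{Reduction to the generic fibre.} Let $\phi\colon\EtFundGrp{U_A}\twoheadrightarrow G$ be the quotient map. Compose it with the natural map $\EtFundGrp{U_{A,\mathbb{Q}}}\to\EtFundGrp{U_A}$ to get $\psi\colon\EtFundGrp{U_{A,\mathbb{Q}}}\to G$. This $\psi$ is surjective, since by Observation~\ref{lem:geometric-surjectivity} it is already surjective on the subgroup $\EtFundGrp{U_{A,\FieldAlgeClosure{\mathbb{Q}}}}$. The inertia group $I_a$ in $\EtFundGrp{U_A}$ is defined through $\Spec(K_a^{\mathrm{sh}})$, where $K_a^{\mathrm{sh}}$ has residue characteristic $0$. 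It therefore coincides, up to conjugacy, with the image of the inertia group at $a$ of the curve $U_{A,\FieldAlgeClosure{\mathbb{Q}}}$. That group is procyclic, isomorphic to $\widehat{\mathbb{Z}}(1)$, and is topologically generated by some $\gamma_a$. So every generator of $I_a(G)$ has the form $\psi(\gamma_a)^c$ with $c$ prime to the order $n$. Since $\mathbb{Q}$-rationality of one generator implies it for all generators, it suffices to treat $g=\psi(\gamma_a)$.

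\textbf{Galois action on the inertia subgroup.} Because $a$ is $\mathbb{Q}$-rational, the decomposition group at $a$ in $\EtFundGrp{U_{A,\mathbb{Q}}}$ surjects onto $\AbsGalGrp{\mathbb{Q}}$. Concretely, take the fundamental group of the punctured henselian neighbourhood $\Spec \mathbb{Q}((t-a))$, or of its henselian analogue. This group is an extension of $\AbsGalGrp{\mathbb{Q}}$ by $\widehat{\mathbb{Z}}(1)$. In it, any lift $\tilde\sigma$ of $\sigma\in\AbsGalGrp{\mathbb{Q}}$ satisfies $\tilde\sigma\gamma_a\tilde\sigma^{-1}=\gamma_a^{\chi(\sigma)}$, where $\chi$ is the cyclotomic character. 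This is the step that needs the most care: one has to identify the inertia at the generic point of $D_a$ with the tame inertia of $\mathbb{Q}((t-a))$ compatibly with base points. Everything then follows from the explicit Kummer description of the fundamental group of $\mathbb{Q}((s))$, namely $\widehat{\mathbb{Z}}(1)\rtimes\AbsGalGrp{\mathbb{Q}}$ with the section given by $s\mapsto$ compatible roots $s^{1/m}$.

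\textbf{Conclusion.} Given an integer $k$ prime to $n$, choose $\sigma\in\AbsGalGrp{\mathbb{Q}}$ with $\chi(\sigma)\equiv k\pmod n$. This is possible because $\chi\colon\AbsGalGrp{\mathbb{Q}}\to\widehat{\mathbb{Z}}^{\times}$ is surjective, by irreducibility of cyclotomic polynomials. Applying $\psi$ gives
\begin{equation*}
\psi(\tilde\sigma)\,g\,\psi(\tilde\sigma)^{-1}=g^{\chi(\sigma)}=g^{k},
\end{equation*}
where the last equality holds because $g$ has order $n$. Since $\psi(\tilde\sigma)\in G$, the element $g$ is conjugate in $G$ to $g^k$, so $g$ is $\mathbb{Q}$-rational.
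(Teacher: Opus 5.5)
Your proof is correct, but it takes a different route from the paper. The paper's one-line proof appeals to the Frobenius action on tame inertia together with Dirichlet's theorem. Take a prime $p$ at which $D_a$ meets no other component, so that by Abhyankar's lemma the covering is tame at the closed point of $D_a$ over $p$ and $p\nmid n$. There a Frobenius element conjugates the inertia generator to its $p$-th power, so $g$ is conjugate to $g^{p}$. Dirichlet's theorem then supplies such primes in every class $k \bmod n$ with $\gcd(k,n)=1$, avoiding the finitely many bad ones.

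You instead stay on the generic fibre and run the classical branch cycle argument. The decomposition group of the rational point $a$ is $\widehat{\mathbb{Z}}(1)\rtimes\AbsGalGrp{\mathbb{Q}}$, lifts act on inertia through $\chi$, and surjectivity of $\chi$ onto $\widehat{\mathbb{Z}}^{\times}$ replaces Dirichlet. The mechanism is the same, since $\chi(\mathrm{Frob}_p)=p$, but your version is more elementary. It needs neither Dirichlet's theorem nor tameness at closed points of the arithmetic surface. It also applies verbatim to any finite quotient of $\EtFundGrp{U_{A,\mathbb{Q}}}$ without reference to the integral model. The Frobenius route, in exchange, is phrased on $U_A$ itself and realizes the conjugating elements as Frobenius elements at points of $D_a$, which matches the mixed-characteristic tame viewpoint used elsewhere in the paper.

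The step you flag as delicate is harmless. $\mathcal{O}_{\mathbb{P}^{1}_{\mathbb{Z}},\eta_a}$ coincides with the local ring of $\mathbb{P}^{1}_{\mathbb{Q}}$ at $a$ (e.g.\ $\mathbb{Q}[t]_{(t-a)}$ for finite $a$). Base-point choices only move things by conjugation, which does not affect $\mathbb{Q}$-rationality.
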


\begin{proof}
	The assertion follows from the Frobenius action on tame inertia and Dirichlet's theorem; see~\cite[Corollary~7.1.3]{MR2363329} for details.
\end{proof}

\begin{lemma}
	\label{lem:psl2-rational-odd}
	Let $q\geq 4$ satisfy one of the following conditions:
	\begin{enumerate}[(a)]
		\item $q=2^{f}$ for some $f\geq 2$;
		\item $q=3^{f}$ for some $f\geq 2$;
		\item $q$ is an odd prime power such that $2$ is not a square in $\mathbb{F}_{q}$.
	\end{enumerate}
	Then every nontrivial $\mathbb{Q}$-rational element of odd order in $\PSL_{2}(q)$ has order~$3$.
\end{lemma}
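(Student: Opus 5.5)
The plan is to run through the conjugacy classes of $\PSL_{2}(q)$ using the standard description of its elements. Write $q=p^{f}$ and $d=\gcd(2,q-1)$. Every nontrivial element of $\PSL_{2}(q)$ falls into exactly one of three types:
\begin{enumerate}[(i)]
	\item unipotent, of order $p$;
	\item split semisimple, i.e.\ the image of a conjugate of $\mathrm{diag}(\lambda,\lambda^{-1})$, of order dividing $(q-1)/d$;
	\item non-split semisimple, i.e.\ the image of an element of a non-split torus $\mathbb{F}_{q^{2}}^{\times}$ of norm $1$, of order dividing $(q+1)/d$.
\end{enumerate}
Let $g$ be a nontrivial $\mathbb{Q}$-rational element of odd order $n$. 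We determine $n$ in each of the three cases.

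Suppose first that $g$ is semisimple, of type (ii) or (iii). Then $g$ lies in a cyclic maximal torus $T$ whose normalizer $N(T)$ is dihedral, and $N(T)$ acts on $T$ only by $x\mapsto x^{\pm1}$. I would check that two elements of $T$ of order $>2$ which are conjugate in $\PSL_{2}(q)$ are already conjugate in $N(T)$. This follows by comparing eigenvalues: for $\mathrm{diag}(\lambda,\lambda^{-1})$, or for a norm-one element $\mu$, conjugacy in $\mathrm{SL}_{2}$ up to sign forces the eigenvalue set $\{\lambda^{\pm1}\}$ to match $\{\pm\lambda^{\pm1}\}$, and the sign can be absorbed because $n$ is odd. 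Consequently $g^{a}$ is conjugate to $g$ only if $g^{a}=g^{\pm1}$, i.e.\ $a\equiv\pm1\pmod n$. $\mathbb{Q}$-rationality then forces $(\mathbb{Z}/n\mathbb{Z})^{\times}=\{\pm1\}$, so $n\in\{1,2,3,4,6\}$. Since $n$ is odd and $g\neq 1$, we get $n=3$.

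It remains to handle unipotent $g$, of type (i).
\begin{itemize}
	\item In case (a), $p=2$, so unipotent elements have even order and are excluded.
	\item In case (b), unipotent elements have order $3$, which is allowed.
	\item In case (c), $p$ is odd and $g$ is conjugate to the image of $u_{x}=\begin{pmatrix}1&x\\0&1\end{pmatrix}$. Conjugating $u_{1}$ by $\mathrm{diag}(\lambda,\lambda^{-1})$ gives $u_{\lambda^{2}}$. The $\PSL_{2}(q)$-classes of nontrivial unipotent elements are exactly two, namely the square and the non-square classes of $x$ in $\mathbb{F}_{q}^{\times}$, since the centralizer computation shows that conjugation by the Borel subgroup only scales $x$ by squares. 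So $u_{1}^{2}=u_{2}$ is conjugate to $u_{1}$ iff $2$ is a square in $\mathbb{F}_{q}$. This fails by hypothesis, while $2$ is coprime to $p$, so $g$ is not $\mathbb{Q}$-rational.
\end{itemize}
Together with the semisimple case, this proves that every nontrivial $\mathbb{Q}$-rational element of odd order has order $3$.

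I expect the main obstacle to be the careful justification of the fusion statements, namely that conjugacy within a torus is controlled by its dihedral normalizer in $\PSL_{2}(q)$, and not merely in $\PGL_{2}(q)$, together with the two-class description of the unipotents. The first step would be to argue with eigenvalues (traces up to sign) for the tori, and with the Bruhat decomposition, $C(u)$ being the unipotent radical times $\pm1$, for the unipotent classes.
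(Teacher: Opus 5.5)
Your proof is correct and follows essentially the same route as the paper: you split into semisimple and unipotent elements, compare the eigenvalues of the odd-order lift to $\SL_{2}(q)$ in the semisimple case, and in the unipotent case use that $u(x)$ and $u(2x)$ are conjugate only when $2$ is a square in $\mathbb{F}_{q}$. The only difference is that the paper uses just the exponent $a=2$, so that $\{\lambda,\lambda^{-1}\}=\{\lambda^{2},\lambda^{-2}\}$ gives $\lambda^{3}=1$ at once, whereas you use all $a$ coprime to $n$ to get $\varphi(n)\leq 2$; the dihedral-normalizer fusion step you flag as the main obstacle is not needed, because the eigenvalue comparison alone already yields $\lambda^{a}=\lambda^{\pm1}$.
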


\begin{proof}
	We use the standard description of semisimple and unipotent conjugacy classes in $\PSL_{2}(q)$; see~\cite[Sections~2.1 and~26.2]{MR2850737}.
	Assume first that $g$ is semisimple.
	Since $g$ has odd order and is $\mathbb{Q}$-rational, it is conjugate to $g^2$.
	Choose the unique lift $\widetilde{g}\in\SL_{2}(q)$ of odd order, and let its eigenvalues be $\lambda$ and $\lambda^{-1}$.
	Since $g$ and $g^2$ are conjugate in $\PSL_{2}(q)$, their unique odd-order lifts $\widetilde{g}$ and $\widetilde{g}^2$ are conjugate in $\SL_{2}(q)$.
	Hence
	\begin{equation*}
		\{\lambda,\lambda^{-1}\}
		=
		\{\lambda^2,\lambda^{-2}\}.
	\end{equation*}
	Thus, either $\lambda=1$ or $\lambda^3=1$.
	Therefore $g=1$ or $g^3=1$, and a nontrivial $g$ has order~$3$.

	It remains to consider the case where $g$ is unipotent.
	Write $q=p^f$.
	Since $g$ is nontrivial and unipotent, it has order~$p$.
	Since $g$ has odd order, we have $p\neq 2$.
	If $p=3$, the assertion follows immediately.
	We may therefore assume that condition~\textup{(c)} holds.
	The element $g$ is represented by
	\begin{equation*}
		u(a)=
		\begin{pmatrix}
			1 & a \\
			0 & 1
		\end{pmatrix}
		\qquad (a\in \mathbb{F}_{q}^{\times}).
	\end{equation*}
	The $\mathbb{Q}$-rationality of $g$ implies that $g$ and $g^2$ are conjugate.
	Since $u(a)^2=u(2a)$, the images of $u(a)$ and $u(2a)$ in $\PSL_{2}(q)$ are conjugate.
	For odd $q$, this is possible only if $2$ is a square in $\mathbb{F}_{q}^{\times}$, contradicting condition~\textup{(c)}.
	Thus $g$ cannot be unipotent.
	This completes the proof.
\end{proof}

\begin{remark}
	There are isomorphisms
	\begin{equation*}
		\PSL_{2}(4)\cong A_{5},\qquad \PSL_{2}(9)\cong A_{6}.
	\end{equation*}
	Hence Lemma~\ref{lem:psl2-rational-odd} also implies that \emph{every nontrivial odd-order $\mathbb{Q}$-rational element of $A_{5}$ or $A_{6}$ has order~$3$}.
	On the other hand, for alternating groups $A_{n}$ with $n\geq 7$, the situation is different: odd-order $\mathbb{Q}$-rational elements need not have order $3$.
\end{remark}

\begin{theorem}
	\label{thm:small-simple-obstruction}
	Assume that $D_A$ is normal crossings at the prime~$2$.
	Assume moreover that either $\CardinalityOfSet{A}=3$ or $D_A$ is normal crossings at the prime~$3$.
	Let $q\geq 4$ satisfy one of the following conditions:
	\begin{enumerate}[(a)]
		\item $q=2^{f}$ for some $f\geq 2$;
		\item $q=3^{f}$ for some $f\geq 2$;
		\item $q$ is an odd prime power such that $2$ is not a square in $\mathbb{F}_{q}$.
	\end{enumerate}
	Then there is no surjective homomorphism
	\begin{equation*}
		\EtFundGrp{U_A}\twoheadrightarrow \PSL_{2}(q).
	\end{equation*}
\end{theorem}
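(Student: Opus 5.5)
Suppose, for contradiction, that there is a surjection $\EtFundGrp{U_A}\twoheadrightarrow G\coloneq\PSL_{2}(q)$. We will pin down the inertia subgroups $I_a(G)$ and then show that they cannot generate $G$.

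\emph{Step 1: the inertia generators have order $1$ or $3$.} Since $D_A$ is normal crossings at $2$, Lemma~\ref{lem:good-prime-inertia-UA} shows that each $I_a(G)$ is cyclic of odd order. Choose a generator $g_a$ of $I_a(G)$. By Proposition~\ref{prop:boundary-generators-rational}, $g_a$ is $\mathbb{Q}$-rational. Lemma~\ref{lem:psl2-rational-odd} then gives that every $g_a$ has order $1$ or $3$.

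\emph{Step 2: the case where $D_A$ is normal crossings at $3$.} Lemma~\ref{lem:good-prime-inertia-UA} at the prime $3$ shows that each $|I_a(G)|$ is prime to $3$. Combined with Step 1, this forces $I_a(G)=1$ for all $a$. By Observation~\ref{lem:geometric-surjectivity}, the inertia subgroups generate $G$, so $G=1$, a contradiction.

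\emph{Step 3: the case $|A|=3$.} By Observation~\ref{lem:geometric-surjectivity}, $G$ is generated by elements $\gamma_1,\gamma_2,\gamma_3$ with $\gamma_1\gamma_2\gamma_3=1$, each conjugate to a power of some $g_a$. Hence each $\gamma_i$ has order dividing $3$.
\begin{itemize}
\item If some $\gamma_i=1$, then $G$ is generated by the remaining two, which are inverse to each other. So $G$ is cyclic, which is impossible for a non-abelian simple group.
\item Otherwise all three have order exactly $3$, and $G$ is a quotient of the von Dyck group $\langle x,y,z\mid x^3=y^3=z^3=xyz=1\rangle$, the $(3,3,3)$ triangle group. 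This group is Euclidean and solvable: it is an extension of $\mathbb{Z}/3$ by $\mathbb{Z}^2$. Its finite quotients are therefore solvable, whereas $\PSL_2(q)$ with $q\ge 4$ is non-abelian simple. This is a contradiction.
\end{itemize}

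\emph{Main obstacle.} The main point to check is the structure of the $(3,3,3)$ triangle group. I would verify it directly: the normal closure of $xy^{-1}$ is abelian of index $3$, or equivalently the group is the orientation-preserving symmetry group of the hexagonal tiling. I would also be careful that the $\gamma_i$ really are generators of inertia subgroups, so that Step 1 applies to each of them. Up to conjugation, each $\gamma_i$ is a topological generator of the image of a boundary inertia group, and $\mathbb{Q}$-rationality is preserved under conjugation, so this holds.
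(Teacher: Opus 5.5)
Your proposal is correct and follows the paper's proof essentially step for step. Lemma~\ref{lem:good-prime-inertia-UA}, Proposition~\ref{prop:boundary-generators-rational} and Lemma~\ref{lem:psl2-rational-odd} force every inertia generator to have order $1$ or $3$; then either normal crossings at $3$ kills all inertia, or for $\CardinalityOfSet{A}=3$ the quotient factors through the prosolvable completion of $\Delta(3,3,3)$. Two small remarks: your case split in Step~3 is unnecessary, since $\gamma_i=1$ still satisfies $\gamma_i^3=1$ and so $G$ is again a quotient of $\Delta(3,3,3)$; and the paper treats $A=\varnothing$ separately via $\EtFundGrp{\mathbb{P}^{1}_{\mathbb{Z}}}=1$, which your Step~2 covers only implicitly.
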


\begin{proof}
	Assume that there exists a surjective homomorphism $\EtFundGrp{U_A}\twoheadrightarrow G \coloneq \PSL_{2}(q)$.
	If $A=\varnothing$, then $U_A=\mathbb{P}^{1}_{\mathbb{Z}}$ and $\EtFundGrp{\mathbb{P}^{1}_{\mathbb{Z}}}\cong \EtFundGrp{\Spec(\mathbb{Z})}=1$, contradicting the assumed surjection.
	Hence we may assume that $A\neq\varnothing$.
	Let $a\in A$ and choose a generator $g$ of $I_a(G)$.
	By Lemma~\ref{lem:good-prime-inertia-UA} and Proposition~\ref{prop:boundary-generators-rational}, the element $g$ has odd order and is $\mathbb{Q}$-rational.
	Hence Lemma~\ref{lem:psl2-rational-odd} implies that $g$ is trivial or has order~$3$.

	When $D_A$ is normal crossings at the prime~$3$, Lemma~\ref{lem:good-prime-inertia-UA} excludes the possibility that $g$ has order~$3$.
	Since $a$ was arbitrary, this gives $I_a(G)=1$ for every $a\in A$, which contradicts Observation~\ref{lem:geometric-surjectivity}.

	It remains to consider the case $\CardinalityOfSet{A}=3$.
	Write $A=\{a_1,a_2,a_3\}$.
	By Observation~\ref{lem:geometric-surjectivity}, the quotient $\EtFundGrp{U_A}\twoheadrightarrow G$ is generated by the images of three boundary inertia elements $\gamma_1,\gamma_2,\gamma_3$ satisfying $\gamma_1\gamma_2\gamma_3=1$.
	Since the images of the $\gamma_i$ in $G$ have order dividing $3$, the quotient factors through the profinite completion of
	\begin{equation*}
		\Delta(3,3,3)
		\coloneq
		\langle x_1,x_2,x_3\mid x_1^3=x_2^3=x_3^3=x_1x_2x_3=1\rangle.
	\end{equation*}
	The triangle group $\Delta(3,3,3)$ fits into an exact sequence
	\begin{equation*}
		1\to \mathbb{Z}^{2}\to \Delta(3,3,3)\to \mathbb{Z}/3\mathbb{Z}\to 1.
	\end{equation*}
	In particular, its profinite completion is prosolvable.
	This contradicts the fact that $\PSL_{2}(q)$ is non-abelian simple, hence non-solvable, for $q\geq 4$ (see~\cite[Theorem~24.17 and Example~24.22]{MR2850737}).
	This completes the proof.
\end{proof}

\begin{remark}
	Let $q=p^{f}$ be an odd prime power.
	The element $2$ is not a square in $\mathbb{F}_{q}$ if and only if $f$ is odd and $p\equiv \pm 3 \pmod{8}$.
	Indeed, an element $a\in\mathbb{F}_{p}^{\times}$ is a square in $\mathbb{F}_{q}^{\times}$ if and only if $a^{(q-1)/2}=1$.
	We have
	\begin{equation*}
		a^{(q-1)/2}
		=
		\left(a^{(p-1)/2}\right)^{(q-1)/(p-1)}.
	\end{equation*}
	Since
	\begin{equation*}
		\frac{q-1}{p-1}=1+p+\cdots+p^{f-1}\equiv f\pmod{2},
	\end{equation*}
	if $f$ is even, then $(q-1)/(p-1)$ is even, and hence $a^{(q-1)/2}=1$ for every $a\in\mathbb{F}_{p}^{\times}$.
	Thus, every element of $\mathbb{F}_{p}^{\times}$, in particular $2$, is a square in $\mathbb{F}_{q}$.
	Next, assume that $f$ is odd.
	In this case, $(q-1)/(p-1)$ is odd, and therefore
	\begin{equation*}
		a^{(q-1)/2}=a^{(p-1)/2}.
	\end{equation*}
	Consequently, $2$ is a square in $\mathbb{F}_{q}$ if and only if $2$ is a square in $\mathbb{F}_{p}$.
	By the supplementary law for quadratic reciprocity, we have
	\begin{equation*}
		\left(\frac{2}{p}\right)
		=
		(-1)^{(p^{2}-1)/8}.
	\end{equation*}
	Hence $2$ is not a square in $\mathbb{F}_{p}$ if and only if $p\equiv \pm 3 \pmod{8}$.
\end{remark}

\begin{remark}
	If Conjecture~\ref{Iconjecture_NCat2} is true, then the triviality of $\EtFundGrp{U_A}$ is equivalent to the condition that $D_A$ is normal crossings at the prime~$2$.
	On the other hand, when $D_A$ is not normal crossings at the prime~$2$, it is not known whether the \'etale fundamental group is finite.
	This leads to the following question:

	\medskip

	\begin{quote}
		Is $\EtFundGrp{U_A}$ infinite whenever $D_A$ is not normal crossings at the prime~$2$?
	\end{quote}

	\medskip

	\noindent
	This question is related to finiteness questions for tame fundamental groups of arithmetic schemes; see~\cite{MR1883386} and~\cite{MR2578565}.
	For a related arithmetic analogue of the Lefschetz and Nori theorems for fundamental groups, see Bost--Charles~\cite{bost2022quasiprojectiveformalanalyticarithmeticsurfaces}.
\end{remark}

\section*{Acknowledgements}
The first author is sincerely grateful to Prof. Akio~Tamagawa for his generous guidance and helpful suggestions throughout this work.
This work was supported by the Japan Society for the Promotion of Science (JSPS) KAKENHI Grant Numbers 25KJ0125 and 23KJ0881.

\begingroup
\raggedright
\printbibliography
\endgroup

\end{document}